\documentclass[abstract=on]{scrartcl}

 \usepackage{amsmath,amsthm,amsfonts,amssymb}
 \usepackage{hyperref} 
 \usepackage{mathrsfs} 

 \usepackage{a4wide}
 \usepackage{mathptmx}

  \newtheorem{theorem}{Theorem}[section]
\newtheorem{corollary}[theorem]{Corollary}

\newtheorem{lemma}[theorem]{Lemma}
\newtheorem{proposition}[theorem]{Proposition}

\theoremstyle{definition}
\newtheorem{definition}[theorem]{Definition}
\newtheorem{remark}{Remark}

\begin{document}
 
\title{Multiple $\mathbb{S}^{1}$-orbits for  \\ the Schr\"{o}dinger-Newton system }
\author{Silvia Cingolani\thanks{Supported by the MIUR project \textit{Variational and
topological methods in the study of nonlinear phenomena} 
   (PRIN 2009) and by GNAMPA project 2012.}\\
  Dipartimento di Meccanica, Matematica e Management \\ Politecnico di Bari
  \\
   via Orabona 4, 70125 Bari, Italy \and
   Simone Secchi\thanks{Supported by the PRIN 
   2009 project \textit{Teoria dei punti critici
e metodi perturbativi per equazioni differenziali nonlineari}.} \\
   Dipartimento di Matematica e Applicazioni\\  Universit\`a di Milano-Bicocca
   \\
    via R. Cozzi 53, I-20125 Milano, Italy
   }
   \date{}
\maketitle    

\begin{abstract}
We prove existence and multiplicity of symmetric solutions for the
\emph{Schr\"{o}dinger-Newton system} in three
 dimensional space using equivariant Morse theory.
\end{abstract}

\noindent 
AMS Subject Classification:  35Q55, 35Q40, 35J20, 35B06

\section{Introduction}

The \emph{Schr\"{o}dinger-Newton system} in three dimensional space
   has a long standing history.
It was firstly proposed in 1954 by Pekar for describing the
    quantum mechanics of a polaron. Successively it was derived by
      Choquard  for describing an electron trapped in its own
       hole and  by  Penrose  \cite{pe1, pe2, pe3}
in his discussions on the selfgravitating matter.

For a single particle of mass $m$ the system is
obtained by coupling together the linear Schr\"{o}dinger equation
of quantum mechanics with the Poisson equation from 
       Newtonian mechanics.  It has the form
\begin{equation}
\begin{cases}
-\frac{\hbar^{2}}{2m}\Delta\psi+V(x)\psi+U\psi=0,\\
-\Delta U+4\pi\kappa|\psi|^{2}=0,
\end{cases}
\label{sys:s-n}%
\end{equation}
where $\psi$ is the complex wave function, $U$ is the gravitational potential
energy, $V$ is a given potential, $\hbar$ is Planck's constant, and
$\kappa:=\mathrm{G}m^{2}$, $\mathrm{G}$ being Newton's constant.

Rescaling
$\psi(x)=\frac{1}{\hbar}\frac{\hat{\psi}(x)}{\sqrt{2\kappa m}}$,
$V(x)=\frac{1}{2m}\hat{V}(x)$, $U(x)=\frac{1}{2m}\hat{U}(x),$
system~\eqref{sys:s-n} becomes equivalent to the single nonlocal
equation
\begin{equation}
-\hbar^{2}\Delta\hat{\psi}+\hat{V}(x)\hat{\psi}=\frac{1}{\hbar^{2}}
 \Big (  \frac{1}{\left\vert x\right\vert }\ast|\hat{\psi}|^{2} \Big )  \hat{\psi}.\\
 \label{eq:1.3}
\end{equation}

The existence of one solution can be traced back to Lions' paper~\cite{l}.
Successively  equation $(\ref{eq:1.3})$ and related equations
have been investigated by many authors, see e.g. \cite{a, fl, lieb,
fty, hmt, gv, mz, mt,  mpt, n, secchi, t, ccs2, ms} and the references therein.
Semiclassical analysis for equation (\ref{eq:1.3}) has been studied in \cite{ww} and in \cite{css}
for a more general convolution potential, not necessarily radially symmetric.

In this work we shall consider the nonlocal equation $(\ref{eq:1.3}$)
in presence of a  magnetic potential $A$ and an electric potential $V$ which satisfy
specific symmetry. Precisely, we consider $G$ a closed subgroup of the group 
       $O(3)$ of linear isometries of $\mathbb{R}^{3}$ and assume that 
       $A\colon \mathbb{R}^{3}\rightarrow\mathbb{R}^{3}$ is a $C^{1}%
$-function, and $V\colon \mathbb{R}^{3}\rightarrow\mathbb{R}$ is a bounded
continuous function with $\inf_{\mathbb{R}^{3}}V>0$, which satisfy
\begin{equation}
A(gx)=gA(x)\text{ \ \ and \ \ }V(gx)=V(x)\quad\text{for all $g\in G$,
$x\in\mathbb{R}^{3}$}. \label{G}
\end{equation}
Given a continuous homomorphism of groups $\tau\colon G\rightarrow\mathbb{S}^{1}$
into the group $\mathbb{S}^{1}$ of unit complex numbers. A physically relevant 
       example is a constant magnetic field $B= \operatorname{curl}A = (0,0,2)$ 
       and the group $G_m=\{e^{2\pi i k /m} \mid k=1,\ldots,m\}$ for 
       $m \in \mathbb{N}$, $m \geq 1$; see Subsection \ref{subs:5.1} for more details.

We are interested in semiclassical states, i.e.
solutions as $\varepsilon\rightarrow0$
to the problem
\begin{equation}
\begin{cases}
\left(  -\varepsilon\mathrm{i}\nabla+A\right)^{2}u+V(x)u=\frac{1}{\varepsilon^{2}}
\left(  \frac{1}{\left\vert x\right\vert }\ast|u|^{2}\right)  u,\\
u\in L^{2}(\mathbb{R}^{3},\mathbb{C}),\\
\varepsilon\nabla u+\mathrm{i}Au\in L^{2}(\mathbb{R}^{3},\mathbb{C}^{3}),
\end{cases}
\label{prob}%
\end{equation}
which satisfy
\begin{equation}
u(gx)=\tau(g)u(x)\text{ \ \ \ for all }g\in G,\text{ }x\in\mathbb{R}^{3}.
\label{tau-inv}%
\end{equation}
This implies that the absolute value $\left\vert u\right\vert $ of $u$ is
$G$-invariant and
 the phase of $u(gx)$ is that of $u(x)$ multiplied by $\tau(g)$.

Recently in \cite{ccs} the authors have showed that there is a combined effect 
       of the symmetries and the electric potential $V$ on the number of 
       semiclassical $\tau$-intertwining solutions to
$(\ref{prob}).$ More precisely, we showed that the Lusternik-Schnirelmann
category of the $G$-orbit space of a suitable set $M_{\tau},$ depending on $V$
and $\tau,$ furnishes a lower bound on the number of solutions of this type.
 In this work we shall apply equivariant Morse theory for
 better multiplicity results than those given by Lusternik-Schnirelmann category.
 Moreover equivariant Morse theory provides information on the local behavior of a
functional around a critical orbit. The main result is established in Theorem \ref{mainthm}.
For the local case, similar results are obtained in \cite{cc2}. 
       For other results about magnetic Schr\"{o}dinger equations, we refer to \cite{CS, CIS}.

Finally, concerning magnetic Pekar functional, we mention the recent results in \cite{ghw}.

\section{The variational problem}

\label{secvarprob}Set $\nabla_{\varepsilon,A}u=\varepsilon\nabla
u+\mathrm{i}Au$ and consider the real Hilbert space%
\[
H_{\varepsilon,A}^{1}(\mathbb{R}^{3},\mathbb{C}):=\{u\in L^{2}(\mathbb{R}%
^{3},\mathbb{C})\mid \nabla_{\varepsilon,A}u\in L^{2}(\mathbb{R}^{3}%
,\mathbb{C}^{3})\}
\]
with the scalar product
\begin{equation}
\left\langle u,v\right\rangle _{\varepsilon,A,V}=\operatorname{Re}%
\int_{\mathbb{R}^{3}}\left(  \nabla_{\varepsilon,A}u\cdot\overline
{\nabla_{\varepsilon,A}v}+V(x)u\overline{v}\right)  . \label{sp}%
\end{equation}
We write
\[
\left\Vert u\right\Vert _{\varepsilon,A,V}= \Big (  \int_{\mathbb{R}^{3}%
}\left(  \left\vert \nabla_{\varepsilon,A}u\right\vert ^{2}+V(x)\left\vert
u\right\vert ^{2}\right)   \Big )^{1/2}%
\]
for the corresponding norm.

If $u\in H_{\varepsilon,A}^{1}(\mathbb{R}^{3},\mathbb{C}),$ then $\left\vert
u\right\vert \in H^{1}(\mathbb{R}^{3},\mathbb{R})$ and
the diamagnetic inequality \cite{ll} holds
\begin{equation}
\varepsilon\left\vert \nabla|u(x)|\right\vert \leq\left\vert \varepsilon\nabla
u(x)+\mathrm{i}A(x)u(x)\right\vert \text{\quad for a.e. $x\in\mathbb{R}^{3}$}.
 \label{di}%
\end{equation}
 Set
\[
\mathbb{D}(u)=\int_{\mathbb{R}^{3}}\int_{\mathbb{R}^{3}}\frac{|u(x)|^{2}%
|u(y)|^{2}}{\left\vert x-y\right\vert }\,dxdy.
\] 
We need some basic inequalities about convolutions.
        A proof can be found in \cite[Theorem 4.3]{ll} and in \cite{lieb83}.
 
\begin{theorem} \label{thm:HLS}
If $p$, $q \in (1,+\infty)$ satisfy $1/p+1/3=1+1/q$ and $f \in L^p(\mathbb{R}^3)$ then
\begin{equation} \label{eq:22}
\left\||x|*f \right\|_{L^q(\mathbb{R}^3)} \leq N_{p} \|f\|_{L^p(\mathbb{R}^3)}
\end{equation}
for a constant $N_p>0$ that depends on $p$ but not on $f$.
More generally, if $p$, $t \in (1,+\infty)$ satisfy $1/p+1/t+1/3 = 2$ 
        and $f \in L^p(\mathbb{R}^3)$, $g \in L^t (\mathbb{R}^3)$, then
\begin{equation}\label{eq:23}
 \Big | \int_{\mathbb{R}^3 \times \mathbb{R}^3} 
        \frac{f(x)g(y)}{|x-y|}\, dx\, dy  \Big | \leq N_p \|f\|_{L^p(\mathbb{R}^3)} \|g\|_{L^t(\mathbb{R}^3)}
\end{equation}
for some constant $N_p>0$ that does not depend on $f$ and $g$.
\end{theorem}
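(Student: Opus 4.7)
The plan is to prove the convolution estimate \eqref{eq:22}, which is the classical Hardy--Littlewood--Sobolev inequality for the Riesz kernel of order $2$ in $\mathbb{R}^{3}$, and then to deduce the bilinear version \eqref{eq:23} by duality. The exponent relation $1/p+1/3=1+1/q$ is dictated by dilation invariance: rescaling $f$ to $f(\delta\,\cdot)$ forces precisely this relation, and the admissible range is $p\in(1,3/2)$, $q\in(3,+\infty)$.

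For the first step I would follow Hedberg and reduce \eqref{eq:22} to a pointwise bound involving the Hardy--Littlewood maximal operator $M$. For $R>0$ to be chosen, split
$$\bigl(|x|^{-1}*|f|\bigr)(x)=\int_{|x-y|\leq R}\frac{|f(y)|}{|x-y|}\,dy+\int_{|x-y|>R}\frac{|f(y)|}{|x-y|}\,dy.$$
A dyadic decomposition of the first integral over the annuli $\{2^{-k-1}R<|x-y|\leq 2^{-k}R\}$, together with the definition of $Mf$, bounds it by $CR^{2}Mf(x)$. For the second integral, H\"older's inequality with $p'>3$ (equivalent to $p<3/2$) yields $C\|f\|_{L^{p}}R^{3/p'-1}$. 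Balancing the two terms in $R$ and using $p/q=1-2p/3$ gives
$$\bigl|(|x|^{-1}*f)(x)\bigr|\leq C\,Mf(x)^{p/q}\,\|f\|_{L^{p}}^{\,1-p/q}.$$
Raising to the $q$-th power, integrating, and invoking the maximal theorem $\|Mf\|_{L^{p}}\leq C_{p}\|f\|_{L^{p}}$, which is valid precisely because $p>1$, produces \eqref{eq:22}.

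The bilinear estimate \eqref{eq:23} then follows by duality. Setting $h=|x|^{-1}*f$, Fubini yields
$$\int_{\mathbb{R}^{3}\times\mathbb{R}^{3}}\frac{f(x)g(y)}{|x-y|}\,dx\,dy=\int_{\mathbb{R}^{3}}h(y)g(y)\,dy.$$
The relation $1/p+1/t+1/3=2$ is equivalent to $t=q'$, so H\"older's inequality in the $y$-variable followed by \eqref{eq:22} gives $\bigl|\int hg\bigr|\leq \|h\|_{L^{q}}\|g\|_{L^{q'}}\leq N_{p}\|f\|_{L^{p}}\|g\|_{L^{t}}$.

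The main obstacle is the pointwise estimate of the first step together with the $L^{p}$-boundedness of the maximal operator; both degenerate at the endpoints $p=1$ and $q=\infty$, which is the structural reason the hypotheses $p,q\in(1,+\infty)$ cannot be relaxed. A completely different approach, due to Lieb, replaces the maximal-function reduction by symmetric decreasing rearrangement and computes the sharp constant explicitly; it produces more information, but its heavier machinery is unnecessary for the qualitative use of Theorem \ref{thm:HLS} made in the paper.
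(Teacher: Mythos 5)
Your argument is correct, but note that the paper does not actually prove Theorem \ref{thm:HLS}: it is quoted as known, with references to \cite[Theorem 4.3]{ll} and to \cite{lieb83}. So the comparison is really between your route and the cited ones. Your Hedberg-style proof — splitting $|x|^{-1}*|f|$ at radius $R$, bounding the near part by $CR^{2}Mf(x)$ via dyadic annuli, the far part by $CR^{3/p'-1}\|f\|_{L^p}$ via H\"older (using $p<3/2$, i.e.\ $p'>3$), optimizing in $R$ to get $|(|x|^{-1}*f)(x)|\le C\,Mf(x)^{p/q}\|f\|_{L^p}^{1-p/q}$, and closing with the maximal theorem — is a standard, self-contained derivation of \eqref{eq:22}, and your exponent bookkeeping checks out ($p/q=1-2p/3$, and $1/p+1/t+1/3=2$ forces $p,t\in(1,3/2)$ and $t=q'$, so the duality step legitimately yields \eqref{eq:23}; Tonelli applied to $|f|,|g|$ justifies the Fubini interchange). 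By contrast, Lieb--Loss prove the bilinear form \eqref{eq:23} directly by a layer-cake decomposition of the kernel and of $f,g$, and Lieb's 1983 proof uses symmetric decreasing rearrangement to identify the sharp constant — which, as you observe, is more than the paper needs, since Theorem \ref{thm:HLS} is only used qualitatively (e.g.\ for \eqref{D} and the $C^2$ estimates in the Appendix). Your approach trades the sharp constant for elementarity and locality of the argument; either is adequate here.
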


Theorem \ref{thm:HLS} yields that
\begin{equation}
\mathbb{D}(u)\leq C\Vert u\Vert_{L^{12/5}(\mathbb{R}^{3})}^{4} \label{D}%
\end{equation}
for every $u\in H_{\varepsilon,A}^{1}(\mathbb{R}^{3},\mathbb{C}).$

The energy functional $J_{\varepsilon,A,V}\colon H_{\varepsilon,A}^{1}%
(\mathbb{R}^{3},\mathbb{C})\rightarrow\mathbb{R}$ associated to problem
(\ref{prob}), defined by
\[
J_{\varepsilon,A,V}(u)=\frac{1}{2}\left\Vert u\right\Vert _{\varepsilon
,A,V}^{2}-\frac{1}{4\varepsilon^{2}}\mathbb{D}(u),
\]
is of class $C^{1}$, and its first derivative is given by%
\[
J_{\varepsilon,A,V}^{\prime}(u)[v]=\left\langle u,v\right\rangle _{\varepsilon
,A,V}-\frac{1}{\varepsilon^{2}}\operatorname{Re}\int_{\mathbb{R}^{3}} \Big (
\frac{1}{\left\vert x\right\vert }\ast\left\vert u\right\vert ^{2} \Big )
u\overline{v}.
\]
Moreover  we can write the second derivative
\begin{equation*}
J_{\varepsilon,A,V}''(u)[v,w] = \left\langle w,v\right\rangle_{\varepsilon
,A,V}-  \frac{1}{\varepsilon^{2}}\operatorname{Re}\int_{\mathbb{R}^{3}} \Big (
\frac{1}{\left\vert x\right\vert }\ast\left\vert u\right\vert ^{2} \Big )
w\overline{v} -
 \frac{2}{\varepsilon^{2}}\operatorname{Re}\int_{\mathbb{R}^{3}}
 \Big (
\frac{1}{\left\vert x\right\vert } \ast \left(  u \bar w \right)  \Big ) u \overline{v} .
\end{equation*}
By $(\ref{eq:23})$ it is easy to recognize that
\begin{align*}
|J_{\varepsilon,A,V}''(u)[v,w]| & \leq \| v\|_{\varepsilon, A, V} \ \| w\|_{\varepsilon, A, V} +
C \Vert u \Vert^2_{L^{12/5}(\mathbb{R}^{3})}\Vert v \Vert_{L^{12/5}
        (\mathbb{R}^{3})} \Vert w \Vert_{L^{12/5}(\mathbb{R}^{3})} \\ &
\leq K \| v\|_{\varepsilon, A, V} \ \| w\|_{\varepsilon, A, V}.
\end{align*}
We postpone the proof that $J_{\varepsilon,A,V}$ is of class $C^2$ to the Appendix.

The solutions to problem (\ref{prob}) are the critical points of
$J_{\varepsilon,A,V}$.
The action of $G$ on $H_{\varepsilon,A}^{1}(\mathbb{R}^{3},\mathbb{C})$
defined by $(g,u)\mapsto u_{g},$ where%
\[
(u_{g})(x)=\tau(g)u(g^{-1}x),
\]
satisfies%
\[
\left\langle u_{g},v_{g}\right\rangle _{\varepsilon,A,V}=\left\langle
u,v\right\rangle _{\varepsilon,A,V}\text{ \ \ \ and \ \ \ }\mathbb{D}%
(u_{g})=\mathbb{D}(u)
\]
for all $g\in G,$ $u,v\in H_{\varepsilon,A}^{1}(\mathbb{R}^{3},\mathbb{C}).$
Hence, $J_{\varepsilon,A,V}$ is $G$-invariant. By the principle of symmetric
criticality \cite{p, w}, the critical points of the restriction of
$J_{\varepsilon,A,V}$ to the fixed point space of this $G$-action, denoted by
\begin{align*}
H_{\varepsilon,A}^{1}(\mathbb{R}^{3},\mathbb{C})^{\tau}  &  =\left\{  u\in
H_{\varepsilon,A}^{1}(\mathbb{R}^{3},\mathbb{C})\mid u_{g}=u\right\} \\
&  =\left\{  u\in H_{\varepsilon,A}^{1}(\mathbb{R}^{3},\mathbb{C}%
)\mid u(gx)=\tau(g)u(x)\quad\forall x\in\mathbb{R}^{3},\ g\in
G\right\}  ,
\end{align*}
are the solutions to problem (\ref{prob}) which satisfy (\ref{tau-inv}).

Let us define the \emph{Nehari manifold}%
\[
\mathcal{N}_{\varepsilon,A,V}^{\tau}=\left\{  u\in H_{\varepsilon,A}%
^{1}(\mathbb{R}^{3},\mathbb{C})^{\tau}\mid \text{$u\neq 0$ and $\varepsilon^{2}\left\Vert
u\right\Vert _{\varepsilon,A,V}^{2}=\mathbb{D}(u)$}\right\}  ,
\]
which is a $C^{2}$-manifold radially diffeomorphic to the unit sphere in
$H_{\varepsilon,A}^{1}(\mathbb{R}^{3},$ $\mathbb{C})^{\tau}.$ The critical points
of the restriction of $J_{\varepsilon,A,V}$ to $\mathcal{N}_{\varepsilon
,A,V}^{\tau}$ are precisely the nontrivial solutions to (\ref{prob}) which
satisfy (\ref{tau-inv}).

Since $\mathbb{S}^{1}$ acts on $H_{\varepsilon,A}^{1}(\mathbb{R}^{3},\mathbb{C}%
)^{\tau}$ by scalar multiplication: $(e^{\mathrm{i}\theta},u)\mapsto
e^{\mathrm{i}\theta}u$, the Nehari manifold $\mathcal{N}_{\varepsilon
,A,V}^{\tau}$ and the functional $J_{\varepsilon,A,V}$ are invariant under
this action. Therefore, if $u$ is a critical point of
$J_{\varepsilon,A,V}$ on $\mathcal{N}_{\varepsilon,A,V}^{\tau}$ then so is $\gamma u$
for every $\gamma\in\mathbb{S}^{1}.$ The set $\mathbb{S}^{1}u=\{\gamma
u\mid \gamma\in\mathbb{S}^{1}\}$ is then called a $\tau$\emph{-intertwining
critical }$\mathbb{S}^{1}$\emph{-orbit of }$J_{\varepsilon,A,V}.$ Two solutions of
(\ref{prob}) are said to be \emph{geometrically different} if their
$\mathbb{S}^{1}$-orbits are different.

Recall that $J_{\varepsilon,A,V} \colon \mathcal{N}_{\varepsilon,A,V}^{\tau
}\rightarrow\mathbb{R}$ is said to satisfy the \emph{Palais-Smale condition}
$(PS)_{c}$ at the level $c$ if every sequence $(u_{n})$ such that
\begin{equation*} 
u_{n} \in\mathcal{N}_{\varepsilon,A,V}^{\tau}, \quad 
J_{\varepsilon
,A,V}(u_{n}) \rightarrow c , \quad 
\nabla_{\mathcal{N}_{\varepsilon
,A,V}^{\tau}}J_{\varepsilon,A,V}(u_{n}) \rightarrow0 
        \end{equation*}
contains a convergent subsequence. Here $\nabla_{\mathcal{N}_{\varepsilon
,A,V}^{\tau}}J_{\varepsilon,A,V}(u)$ denotes the orthogonal projection of
$\nabla_{\varepsilon}J_{\varepsilon,A,V}(u)$ onto the tangent space to
$\mathcal{N}_{\varepsilon,A,V}^{\tau}$ at $u$.

In Lemma 3.4 of~\cite{ccs2} the following result was proved for $\varepsilon =1$.

\begin{proposition}
\label{palaissmale} For every $\varepsilon>0$, the functional $J_{\varepsilon
,A,V}\colon \mathcal{N}_{\varepsilon,A,V}^{\tau}\rightarrow\mathbb{R}$ satisfies
(PS)$_{c}$ at each level
\[
c<\varepsilon^{3}\min_{x\in\mathbb{R}^{3}\setminus\{0\}}(\#Gx)V_{\infty}%
^{3/2}E_{1},
\]
where $V_{\infty}=\liminf_{\left\vert x\right\vert \rightarrow\infty}V(x).$
\end{proposition}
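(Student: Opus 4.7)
The plan is to deduce the general case from the case $\varepsilon=1$, which is exactly \cite[Lemma 3.4]{ccs2}, by a clean rescaling argument. Given $u\in H^{1}_{\varepsilon,A}(\mathbb{R}^{3},\mathbb{C})$, I would set $v(y):=u(\varepsilon y)$ together with the rescaled potentials $A_{\varepsilon}(y):=A(\varepsilon y)$ and $V_{\varepsilon}(y):=V(\varepsilon y)$, and verify by a direct change of variables that
\[
\|u\|_{\varepsilon,A,V}^{2}=\varepsilon^{3}\|v\|_{1,A_{\varepsilon},V_{\varepsilon}}^{2},\qquad \mathbb{D}(u)=\varepsilon^{5}\mathbb{D}(v),
\]
so that $J_{\varepsilon,A,V}(u)=\varepsilon^{3}J_{1,A_{\varepsilon},V_{\varepsilon}}(v)$, and the Nehari identity $\varepsilon^{2}\|u\|_{\varepsilon,A,V}^{2}=\mathbb{D}(u)$ transforms into $\|v\|_{1,A_{\varepsilon},V_{\varepsilon}}^{2}=\mathbb{D}(v)$. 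Because $G\subset O(3)$ acts by isometries on $\mathbb{R}^{3}$, the rescaled pair $(A_{\varepsilon},V_{\varepsilon})$ still satisfies (\ref{G}), and since $\tau$ is a homomorphism, the $\tau$-equivariance of $u$ is equivalent to that of $v$. The rescaling is thus a linear isomorphism of Hilbert spaces which restricts to a diffeomorphism $\mathcal{N}^{\tau}_{\varepsilon,A,V}\to \mathcal{N}^{\tau}_{1,A_{\varepsilon},V_{\varepsilon}}$.

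Under this correspondence, any sequence $(u_{n})\subset\mathcal{N}^{\tau}_{\varepsilon,A,V}$ which is Palais--Smale for $J_{\varepsilon,A,V}$ at level $c$ is sent to a sequence $(v_{n})\subset\mathcal{N}^{\tau}_{1,A_{\varepsilon},V_{\varepsilon}}$ which is Palais--Smale for $J_{1,A_{\varepsilon},V_{\varepsilon}}$ at level $c/\varepsilon^{3}$. The rescaled potentials inherit all the hypotheses on $(A,V)$ (continuity, boundedness, positive infimum, $G$-equivariance), and in particular one has $\liminf_{|y|\to\infty}V_{\varepsilon}(y)=\liminf_{|x|\to\infty}V(x)=V_{\infty}$. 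I would then invoke \cite[Lemma 3.4]{ccs2} for $J_{1,A_{\varepsilon},V_{\varepsilon}}$: it yields a convergent subsequence of $(v_{n})$ whenever $c/\varepsilon^{3}<\min_{x\neq 0}(\#Gx)V_{\infty}^{3/2}E_{1}$, which is exactly the hypothesis on $c$. Pulling back through the isomorphism furnishes the required convergent subsequence of $(u_{n})$.

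The one technical point that deserves explicit checking is the compatibility of the tangential gradient on the Nehari manifold under the rescaling, i.e.\ that $\nabla_{\mathcal{N}^{\tau}_{\varepsilon,A,V}}J_{\varepsilon,A,V}(u_{n})\to 0$ in $H^{1}_{\varepsilon,A}$ really does translate into the analogous statement for $(v_{n})$ in $H^{1}_{1,A_{\varepsilon}}$. Since the rescaling differs from a linear isometry only by the scalar factor $\varepsilon^{3/2}$, it commutes with orthogonal projection onto tangent spaces and with the Riesz identification between gradient and differential, so this transfer is routine. The substantive content of the proposition---the appearance of the ground-state mass $V_{\infty}^{3/2}E_{1}$ and the orbit-size factor $\#Gx$ coming from an equivariant concentration-compactness analysis on the Nehari manifold---is entirely absorbed in \cite[Lemma 3.4]{ccs2} and need not be reproved here.
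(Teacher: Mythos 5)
Your proposal is correct and follows the same route the paper intends: the paper simply cites \cite[Lemma 3.4]{ccs2} for $\varepsilon=1$ and leaves the reduction to general $\varepsilon$ implicit, which is exactly the rescaling $v(y)=u(\varepsilon y)$, $A_{\varepsilon}(y)=A(\varepsilon y)$, $V_{\varepsilon}(y)=V(\varepsilon y)$ you carry out. Your scaling identities $\|u\|_{\varepsilon,A,V}^{2}=\varepsilon^{3}\|v\|_{1,A_{\varepsilon},V_{\varepsilon}}^{2}$ and $\mathbb{D}(u)=\varepsilon^{5}\mathbb{D}(v)$, the preservation of the hypotheses (\ref{G}) and of $V_{\infty}$, and the transfer of the Palais--Smale condition through a scalar multiple of an isometry all check out.
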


\section{The limit problem}\label{embedding}

For any positive real number $\lambda$ we consider the
problem
\begin{equation}%
\begin{cases}
-\Delta u+\lambda u=(\frac{1}{\left\vert x\right\vert }\ast u^{2})u,\\
u\in H^{1}(\mathbb{R}^{3},\mathbb{R}).
\end{cases}
\label{limlambda}%
\end{equation}
Its associated energy functional $J_{\lambda}\colon H^{1}(\mathbb{R}%
^{3},\mathbb{R})\rightarrow\mathbb{R}$ is given by%
\[
J_{\lambda}(u)=\frac{1}{2}\left\Vert u\right\Vert _{\lambda}^{2}-\frac{1}%
{4}\mathbb{D}(u),\text{ \ \ \ with \ }\left\Vert u\right\Vert _{\lambda}%
^{2}=\int_{\mathbb{R}^{3}}\left(  \left\vert \nabla u\right\vert ^{2}+\lambda
u^{2}\right)  .
\]
Its Nehari manifold will be denoted by
\[
\mathcal{M}{_{\lambda}}=\left\{  u\in H^{1}(\mathbb{R}^{3},\mathbb{R}%
)\mid \,u\neq0,\quad\left\Vert u\right\Vert _{\lambda}^{2}=\mathbb{D}(u)\right\}
.
\]
We set%
\[
E_{\lambda}=\inf_{u\in\mathcal{M}{_{\lambda}}}J_{\lambda}(u).
\]
The critical points of $J_{\lambda}$ on $\mathcal{M}{_{\lambda}}$ are the
nontrivial solutions to (\ref{limlambda}). Note that $u$ solves
the real-valued problem%
\begin{equation}
\left\{
\begin{array}
[c]{c}%
-\Delta u+u= (\frac{1}{|x|} \ast u^2) u,\\
u\in H^{1}(\mathbb{R}^{3},\mathbb{R})
\end{array}
\right.  \label{lim}%
\end{equation}
if and only if $u_{\lambda}(x)=\lambda u(\sqrt{\lambda}x)$ solves
(\ref{limlambda}). Therefore,%
\[
E_{\lambda}=\lambda^{3/2}E_{1}.
\]
where $E_1$ is the least energy of a nontrivial solution to $(\ref{lim})$.
Minimizers of $J_{\lambda}$ on $\mathcal{M}{_{\lambda}}$ are called ground
states. The existence and uniqueness of ground
states up to sign and translations was established by
Lieb in \cite{lieb}. We denote by $\omega_{\lambda}$ the
positive solution to problem (\ref{limlambda}) which is radially symmetric
with respect to the origin.

Fix a radial function $\varrho\in C^{\infty}(\mathbb{R}^{3},\mathbb{R})$ such
that $\varrho(x)=1$ if $\left\vert x\right\vert \leq\frac{1}{2}$ and
$\varrho(x)=0$ if $\left\vert x\right\vert \geq1.$ For $\varepsilon>0$ set
$\varrho_{\varepsilon}(x)=\varrho(\sqrt{\varepsilon}x)$, $\omega
_{\lambda,\varepsilon}=\varrho_{\varepsilon}\omega_{\lambda}$ $\ $and
\begin{equation}
\upsilon_{\lambda,\varepsilon}=\frac{\left\Vert \omega_{\lambda,\varepsilon
}\right\Vert _{\lambda}}{\sqrt{\mathbb{D}(\omega_{\lambda,\varepsilon})}
}\,\omega_{\lambda,\varepsilon}. \label{bumps}%
\end{equation}
Note that $\operatorname{supp}(\upsilon_{\lambda,\varepsilon})\subset
B(0,1/\sqrt{\varepsilon})=\{x\in\mathbb{R}^{3}\mid \left\vert x\right\vert
\leq1/\sqrt{\varepsilon}\}$ and $\upsilon_{\lambda,\varepsilon}\in
\mathcal{M}_{\lambda}$. An easy computation shows that%
\begin{equation}
\lim_{\varepsilon\rightarrow0}J_{\lambda}(\upsilon_{\lambda,\varepsilon
})=\lambda^{3/2}E_{1}. \label{enerbumps}%
\end{equation}

Now we define
\[
\ell_{G, V}=\inf_{x\in\mathbb{R}^{N}}(\#Gx) {V^{3/2}(x)}
\]
and consider the set%
\[
M_{\tau}=\{x\in\mathbb{R}^{N}\mid (\#Gx) V^{3/2}(x)=\ell
_{G,V},\text{ }G_{x}\subset\ker\tau\}.
\]
Here $Gx=\{gx\mid g\in G\}$ is the $G$-orbit of the point $x\in\mathbb{R}^{3},$
$\#Gx$ is its cardinality, and $G_{x}=\{g\in G\mid gx=x\}$ is its isotropy
subgroup.  Observe that the points
in $M_{\tau}$ are not necessarily local minima of $V$.

In what follows we will assume  that there exists $\alpha>0$ such that the set%
\[
\left\{  y\in\mathbb{R}^{3} \mid (\#Gy)V^{3/2}(y)\leq\ell_{G,V}+\alpha\right\}
\]
is compact. Then%
\[
M_{G,V}=\left\{  y\in\mathbb{R}^{3}\mid (\#Gy)V^{3/2}(y)=\ell_{G,V}\right\}
\]
is a compact $G$-invariant set and all $G$-orbits in $M_{G,V}$ are finite. We
split $M_{G,V}$ according to the orbit type of its elements,
choosing subgroups $G_{1},\ldots,$ $G_{m}$ of $G$ such that the isotropy subgroup
$G_{x}$ of every point $x\in M_{G,V}$ is conjugate to precisely one of the
$G_{i}$'s, and we set%
\[
M_{i}=\left\{  y\in M_{G,V}\mid G_{y}=gG_{i}g^{-1}\text{ for some }g\in
G\right\}  .
\]
Since isotropy subgroups satisfy $G_{gx}=gG_{x}g^{-1},$ the sets $M_{i}$ are
$G$-invariant and, since $V$ is continuous, they are closed and pairwise
disjoint, and
\[
M_{G,V}=M_{1}\cup\cdots\cup M_{m}.
\]
Moreover, since
\[
\left\vert G/G_{i}\right\vert V^{3/2}(y)=(\#Gy)V^{3/2}(y)=\ell_{G,V}\text{
\ \ \ for all \ }y\in M_{i},
\]
the potential $V$ is constant on each $M_{i}.$ Here $\left\vert G/G_{i}%
\right\vert $ denotes the index of $G_{i}$ in $G.$ We denote by $V_{i}$ the
value of $V$ on $M_{i}.$

It is well known that the map $G/G_{\xi}\rightarrow G\xi$ given by $gG_{\xi
}\mapsto g\xi$ is a homeomorphism, see e.g. \cite{tD}. So, if $G_{i}%
\subset\ker\tau$ and $\xi\in M_{i},$ then the map
\[
G\xi\rightarrow\mathbb{S}^{1},\ \ \ \ g\xi\mapsto\tau(g),
\]
is well defined and continuous.

Let $\upsilon_{i,\varepsilon}=\upsilon_{V_{i},\varepsilon}$ be defined as in
(\ref{bumps}) with $\lambda=V_{i}.$ Set
\begin{equation}
\psi_{\varepsilon,\xi}(x)=\sum_{g\xi\in G\xi}\tau(g)\upsilon_{i,\varepsilon
} \Big (  \frac{x-g\xi}{\varepsilon} \Big )  e^{-\mathrm{i}A(g\xi)\cdot\left(
\frac{x-g\xi}{\varepsilon}\right)  }. \label{psi}%
\end{equation}
Let  $\pi_{\varepsilon,A,V}\colon H_{\varepsilon,A}^{1}
(\mathbb{R}^{3},\mathbb{C})^{\tau}\setminus\{0\}\rightarrow\mathcal{N}%
_{\varepsilon,A,V}^{\tau}$ be the radial projection given by%
\begin{equation}
\pi_{\varepsilon,A,V}(u)=\frac{\varepsilon\left\Vert u\right\Vert
_{\varepsilon,A,V}}{\sqrt{\mathbb{D}(u)}}u. \label{radproj}%
\end{equation}
We can derive the following results, arguing as  in  Lemmas  2
in \cite{cc1} (see also Lemma 4.2 in \cite{ccs}).

\begin{lemma}
\label{lemin}Assume that $G_{i}\subset\ker\tau$. Then, the following
hold:
\begin{itemize}
\item[(a)] For every $\xi\in M_{i}$ and $\varepsilon>0,$ one has that%
\[
\psi_{\varepsilon,\xi}(gx)=\tau(g)\psi_{\varepsilon,\xi}(x)\quad \forall g\in G,\text{ }x\in\mathbb{R}^{3}.
\]
\item[(b)] For every $\xi\in M_{i}$ and $\varepsilon>0,$ one has that%
\[
\tau(g)\psi_{\varepsilon,g\xi}(x)=\psi_{\varepsilon,\xi}(x)\quad \forall g\in G,\text{ }x\in\mathbb{R}^{3}.
\]
\item[(c)] One has that
\[
\lim_{\varepsilon\rightarrow0}{\varepsilon}^{-3}J_{\varepsilon,A,V}\left[
\pi_{\varepsilon,A,V}(\psi_{\varepsilon,\xi})\right]  =\ell_{G,V}E_{1}.
\]
uniformly in $\xi\in M_{i}.$
\end{itemize}
\end{lemma}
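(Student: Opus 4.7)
The three assertions are of quite different character: (a) and (b) are bookkeeping consequences of the symmetries built into the definition \eqref{psi} of $\psi_{\varepsilon,\xi}$, while (c) is the substantive concentration estimate.

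For (a) the plan is to expand $\psi_{\varepsilon,\xi}(hx)$ and substitute the summation index $g \mapsto hg$, which leaves the orbit $G\xi$ invariant. Three facts make every term match up: $\tau$ is a homomorphism, so $\tau(hg) = \tau(h)\tau(g)$; the function $\upsilon_{i,\varepsilon}$ is radial (both the cutoff $\varrho$ and the ground state $\omega_{V_i}$ are radial), hence $\upsilon_{i,\varepsilon}(hz) = \upsilon_{i,\varepsilon}(z)$ for $h \in O(3)$; and the equivariance $A(hg\xi) = hA(g\xi)$ together with the fact that $h \in O(3)$ preserves the inner product gives $A(hg\xi) \cdot h(x - g\xi) = A(g\xi) \cdot (x - g\xi)$. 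The well-definedness of $\tau(g)$ as a function of the orbit point $g\xi$ requires $\tau$ to descend to $G/G_\xi$, which uses the hypothesis $G_\xi = G_i \subset \ker \tau$ (together with the remark that $\ker\tau$ is normal, so the conjugation relation defining $M_i$ is compatible with this). Statement (b) is then obtained by reindexing $h \mapsto hg^{-1}$ in the sum for $\psi_{\varepsilon,g\xi}(x)$ and using $\tau(hg^{-1}) = \tau(h)\overline{\tau(g)}$; multiplication by $\tau(g)$ cancels the $\overline{\tau(g)}$ and retrieves $\psi_{\varepsilon,\xi}(x)$.

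The content is in (c). Since $M_i$ is compact and every $G\xi$ is finite, there exists $\delta > 0$ such that $|y_1 - y_2| \geq 3\delta$ for every pair of distinct points $y_1, y_2 \in G\xi$, uniformly in $\xi \in M_i$. The $g$-th summand $\varphi_g$ of $\psi_{\varepsilon,\xi}$ is supported in $B(g\xi, \sqrt{\varepsilon})$, so for $\varepsilon$ small these bumps are pairwise disjoint. The gauge factor $e^{-\mathrm{i} A(g\xi)\cdot y}$, with $y = (x-g\xi)/\varepsilon$, is engineered to cancel the leading magnetic contribution, so that
\begin{equation*}
(\nabla_{\varepsilon, A}\varphi_g)(x) = \tau(g)\bigl[\nabla \upsilon_{i,\varepsilon}(y) + \mathrm{i}(A(g\xi + \varepsilon y) - A(g\xi))\upsilon_{i,\varepsilon}(y)\bigr]e^{-\mathrm{i} A(g\xi)\cdot y}.
\end{equation*}
A first-order expansion of $A \in C^1$ (yielding $|A(g\xi+\varepsilon y) - A(g\xi)| = O(\varepsilon|y|) = O(\sqrt\varepsilon)$ on the support), combined with continuity of $V$ at $g\xi \in M_i$ where $V \equiv V_i$, and the scaling $x = g\xi + \varepsilon y$, yields (using \eqref{enerbumps} and $\mathbb{D}(\upsilon_{i,\varepsilon}) = \|\upsilon_{i,\varepsilon}\|^2_{V_i}$)
\begin{equation*}
\varepsilon^{-3}\|\varphi_g\|^2_{\varepsilon, A, V} \longrightarrow 4 V_i^{3/2}E_1, \qquad \varepsilon^{-5}\mathbb{D}(\varphi_g) \longrightarrow 4 V_i^{3/2}E_1.
\end{equation*}

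Disjointness of supports then gives $\|\psi_{\varepsilon,\xi}\|^2_{\varepsilon, A, V} = \sum_g \|\varphi_g\|^2_{\varepsilon, A, V}$ exactly, while $\mathbb{D}(\psi_{\varepsilon,\xi})$ splits into $\#G\xi$ diagonal terms plus cross terms each bounded by $\delta^{-1}\|\varphi_g\|_{L^2}^2\|\varphi_h\|_{L^2}^2 = O(\varepsilon^6)$, negligible after division by $\varepsilon^5$. Setting $a_\varepsilon = \varepsilon^{-3}\|\psi_{\varepsilon,\xi}\|^2_{\varepsilon,A,V}$ and $b_\varepsilon = \varepsilon^{-5}\mathbb{D}(\psi_{\varepsilon,\xi})$, and using that $J_{\varepsilon,A,V}$ reduces to $\tfrac{1}{4}\|\cdot\|^2_{\varepsilon,A,V}$ on $\mathcal{N}_{\varepsilon,A,V}^{\tau}$, an elementary manipulation gives
\begin{equation*}
\varepsilon^{-3} J_{\varepsilon,A,V}\bigl(\pi_{\varepsilon,A,V}(\psi_{\varepsilon,\xi})\bigr) = \frac{a_\varepsilon^2}{4 b_\varepsilon} \longrightarrow (\#G\xi)\, V_i^{3/2} E_1 = \ell_{G,V} E_1,
\end{equation*}
the last equality being the defining property of $M_i$. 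The main obstacle is making all of this uniform in $\xi \in M_i$: this is where compactness of $M_i$ is crucial, providing a uniform orbit-separation constant $\delta$, a uniform Lipschitz bound for $A$ on a neighborhood of $M_i$, and uniform continuity of $V$, so that every error estimate above is uniform.
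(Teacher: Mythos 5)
Your proposal is correct and follows essentially the same route as the source the paper defers to (the paper itself omits the proof, citing Lemma~2 of \cite{cc1} and Lemma~4.2 of \cite{ccs}): reindexing the sum over the orbit for (a) and (b), using $G_\xi\subset\ker\tau$ (via normality of $\ker\tau$) for well-definedness, and for (c) the disjoint-support decomposition, the gauge-phase cancellation of the magnetic term, the scaling identities $\|\varphi_g\|^2_{\varepsilon,A,V}=\varepsilon^3(\|\upsilon_{i,\varepsilon}\|^2_{V_i}+o(1))$ and $\mathbb{D}(\varphi_g)=\varepsilon^5\mathbb{D}(\upsilon_{i,\varepsilon})$, together with $J_{\varepsilon,A,V}(\pi_{\varepsilon,A,V}(u))=\varepsilon^2\|u\|^4_{\varepsilon,A,V}/(4\mathbb{D}(u))$. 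All the key points are in place, including the uniformity coming from compactness of $M_i$ and the constancy of $\#G\xi=|G/G_i|$ and of $V$ on $M_i$.
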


Let
\[
M_{\tau}=\left\{  y\in M_{G,V}\mid G_{y}\subset\ker\tau\right\}  =\bigcup
_{G_{i}\subset\ker\tau}M_{i}.
\]
As immediate consequence of  Lemma \ref{lemin}, we derive the following result.

\begin{proposition}
\label{ingoing}The map $\widehat{\iota}_{\varepsilon}\colon M_{\tau}\rightarrow
\mathcal{N}_{\varepsilon,A,V}^{\tau}$ given by%
\[
\widehat{\iota}_{\varepsilon}(\xi)=\pi_{\varepsilon,A,V}(\psi_{\varepsilon
,\xi})
\]
is well defined and continuous, and satisfies%
\[
\tau(g)\widehat{\iota}_{\varepsilon}(g\xi)=\widehat{\iota}_{\varepsilon}%
(\xi)\text{ \ \ \ \ }\forall\xi\in M_{\tau},\text{ }g\in G.
\]
Moreover, given $d>\ell_{G,V}E_{1},$ there exists $\varepsilon_{d}>0$ such that%
\[
\varepsilon^{-3}\ J_{\varepsilon,A,V}(\widehat{\iota}_{\varepsilon}(\xi))\leq
d\text{ \ \ \ \ }\forall\xi\in M_{\tau},\text{ }\varepsilon\in(0,\varepsilon
_{d}).
\]

\end{proposition}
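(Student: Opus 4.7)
The plan is to derive each claim of the proposition directly from Lemma~\ref{lemin}, together with elementary properties of the radial projection $\pi_{\varepsilon,A,V}$ and the finite stratification $M_{\tau}=\bigcup_{G_i\subset\ker\tau} M_i$.

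First I would verify well-definedness. For $\xi\in M_i\subset M_\tau$ the sum defining $\psi_{\varepsilon,\xi}$ makes sense because $G_\xi\subset\ker\tau$, so $\tau(g)$ depends only on the orbit point $g\xi$; since $\upsilon_{i,\varepsilon}$ is a nontrivial element of $\mathcal{M}_{V_i}$ the function $\psi_{\varepsilon,\xi}$ is nonzero, so $\pi_{\varepsilon,A,V}(\psi_{\varepsilon,\xi})$ is defined and belongs to $\mathcal{N}^\tau_{\varepsilon,A,V}$ thanks to Lemma~\ref{lemin}(a). Continuity in $\xi$ is a stratum-by-stratum argument: the sets $M_i$ are closed and pairwise disjoint, so it suffices to work within a single $M_i$, where the orbit cardinality $|G/G_i|$ is constant and the orbit $G\xi$ varies continuously in $\xi$. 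On each stratum, each summand in \eqref{psi} is a continuous function of $\xi$ with values in $H^1_{\varepsilon,A}(\mathbb{R}^3,\mathbb{C})$ (translations and the continuous factor $e^{-\mathrm{i}A(g\xi)\cdot((x-g\xi)/\varepsilon)}$ depend continuously on $\xi$), so $\xi\mapsto\psi_{\varepsilon,\xi}$ is continuous. The projection $\pi_{\varepsilon,A,V}$ is continuous on the open set $H^1_{\varepsilon,A}(\mathbb{R}^3,\mathbb{C})^\tau\setminus\{0\}$ because $\mathbb{D}$ and $\|\cdot\|_{\varepsilon,A,V}$ are, so $\widehat{\iota}_{\varepsilon}$ is continuous.

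Next I would establish the equivariance relation. Since $|\tau(g)|=1$, scalar multiplication by $\tau(g)$ preserves both $\|\cdot\|_{\varepsilon,A,V}$ and $\mathbb{D}$, so $\pi_{\varepsilon,A,V}(\tau(g)u)=\tau(g)\pi_{\varepsilon,A,V}(u)$. Combining with Lemma~\ref{lemin}(b) I obtain
\[
\tau(g)\widehat{\iota}_{\varepsilon}(g\xi)=\tau(g)\pi_{\varepsilon,A,V}(\psi_{\varepsilon,g\xi})=\pi_{\varepsilon,A,V}(\tau(g)\psi_{\varepsilon,g\xi})=\pi_{\varepsilon,A,V}(\psi_{\varepsilon,\xi})=\widehat{\iota}_{\varepsilon}(\xi),
\]
which is the claimed identity.

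Finally, for the energy estimate I would invoke Lemma~\ref{lemin}(c): on each stratum $M_i$ with $G_i\subset\ker\tau$ the convergence $\varepsilon^{-3}J_{\varepsilon,A,V}(\widehat{\iota}_\varepsilon(\xi))\to\ell_{G,V}E_1$ is uniform in $\xi\in M_i$. Because $M_\tau$ is the union of the finitely many strata $M_i$ with $G_i\subset\ker\tau$, this convergence is uniform on all of $M_\tau$. Given $d>\ell_{G,V}E_1$, I simply choose $\varepsilon_d>0$ small enough that $\varepsilon^{-3}J_{\varepsilon,A,V}(\widehat{\iota}_\varepsilon(\xi))\leq d$ for every $\xi\in M_\tau$ and every $\varepsilon\in(0,\varepsilon_d)$.

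The proof is essentially a corollary of Lemma~\ref{lemin} and contains no serious analytic obstacle; the only point that requires a little care is the continuity argument, where one has to exploit the stratification of $M_\tau$ into finitely many closed sets $M_i$ of constant orbit type in order to make sense of the sum in \eqref{psi} as a continuous function of $\xi$.
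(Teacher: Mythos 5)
Your proof is correct and follows exactly the route the paper intends: the paper states Proposition~\ref{ingoing} without proof, as an ``immediate consequence'' of Lemma~\ref{lemin}, and your argument simply supplies the routine details (well-definedness and nonvanishing of $\psi_{\varepsilon,\xi}$, stratum-by-stratum continuity on the finitely many closed-and-open pieces $M_i$ of $M_\tau$, equivariance from Lemma~\ref{lemin}(b) plus the fact that $\pi_{\varepsilon,A,V}$ commutes with unit scalars, and the energy bound from the uniform convergence in Lemma~\ref{lemin}(c) over finitely many strata). No discrepancy with the paper's approach.
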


\section{The baryorbit map}

\label{baryorbit}

Let us consider the real-valued problem%
\begin{equation} \label{real}%
\begin{cases}
-\varepsilon^{2}\Delta v+V(x)v=\frac{1}{\varepsilon^{2}}\left(  \frac
{1}{\left\vert x\right\vert }\ast u^{2}\right)  u,\\
v\in H^{1}(\mathbb{R}^{3},\mathbb{R}),\\
v(gx)=v(x)\text{ \ }\forall x\in\mathbb{R}^{3},\text{ }g\in G.
\end{cases}
\end{equation}
Set
\[
H^{1}(\mathbb{R}^{3},\mathbb{R})^{G}=\{v\in H^{1}(\mathbb{R}^{3}%
,\mathbb{R})\mid v(gx)=v(x)\text{ }\forall x\in\mathbb{R}^{3},\text{ }g\in G\}
\]
and write%
\[
\left\Vert
v\right\Vert_{V}^{2}=\int_{\mathbb{R}^{3}}\left(  \varepsilon^2\left\vert
\nabla v\right\vert ^{2}+V(x)v^{2}\right) .
\]
The nontrivial solutions of (\ref{real}) are the critical points of the energy
functional
\[
J_{\varepsilon,V}(v)=\frac{1}{2}\left\Vert v\right\Vert _{\varepsilon,V}%
^{2}-\frac{1}{4\varepsilon^{2}}\mathbb{D}(v)
\]
on the Nehari manifold%
\[
\mathcal{M}_{\varepsilon,V}^{G}=\left\{v\in H^{1}(\mathbb{R}^{3},\mathbb{R}%
)^{G}\mid v\neq0,\text{ }\left\Vert v\right\Vert _{\varepsilon,V}^{2}%
={\varepsilon^{-2}}\mathbb{D}(v)\right\}.
\]
Set%
\begin{equation}
c_{\varepsilon,V}^{G}=\inf_{\mathcal{M}_{\varepsilon,V}^{G}}J_{\varepsilon
,V}=\inf_{\substack{v\in H^{1}(\mathbb{R}^{3},\mathbb{R})^{G}\\v\neq0}%
}\frac{\varepsilon^{2}\left\Vert v\right\Vert _{\varepsilon,V}^{4}%
}{4\mathbb{D}(v)}. \label{infepsV}%
\end{equation}
As proved in Lemma 5.1 in \cite{ccs} we have
\begin{lemma}
\label{cotas}
There results~$0<(\inf_{\mathbb{R}^{3}}V)^{3/2}E_{1}\leq\varepsilon
^{-3}c_{\varepsilon,V}^{G}$ \ for every $\varepsilon>0,$ and%
\[
\limsup_{\varepsilon\rightarrow0}\varepsilon^{-3}c_{\varepsilon,V}^{G}\leq
\ell_{G,V}E_{1},
\]
\end{lemma}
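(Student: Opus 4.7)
The plan is to establish the two parts of the lemma independently, starting from the Nehari characterization (\ref{infepsV}), which we may rewrite as
\[
\varepsilon^{-3}c_{\varepsilon,V}^{G}
= \varepsilon^{-3}\inf_{\substack{v\in H^{1}(\mathbb{R}^{3},\mathbb{R})^{G}\\v\neq 0}}\frac{\varepsilon^{2}\|v\|_{\varepsilon,V}^{4}}{4\mathbb{D}(v)}.
\]
The analogous identity for the limit problem in Section \ref{embedding} gives $E_{\lambda}=\inf_{u\neq 0}\|u\|_{\lambda}^{4}/(4\mathbb{D}(u))=\lambda^{3/2}E_{1}$, and this will serve as the reference quantity for both bounds.

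For the lower bound I would drop the $G$-invariance restriction (which only enlarges the admissible set) and estimate $V(x)\ge V_{0}:=\inf_{\mathbb{R}^{3}}V>0$, so that $\|v\|_{\varepsilon,V}\ge\|v\|_{\varepsilon,V_{0}}$. The change of variables $v(x)=w(x/\varepsilon)$ then yields
\[
\frac{\varepsilon^{2}\|v\|_{\varepsilon,V_{0}}^{4}}{4\mathbb{D}(v)}
=\varepsilon^{3}\,\frac{\|w\|_{V_{0}}^{4}}{4\mathbb{D}(w)},
\]
and taking infima gives $\varepsilon^{-3}c_{\varepsilon,V}^{G}\ge E_{V_{0}}=V_{0}^{3/2}E_{1}>0$, which proves the first inequality.

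For the asymptotic upper bound I would fix $\xi\in M_{G,V}$ of isotropy type $G_{i}$, so that $V(\xi)=V_{i}$ and $(\#G\xi)V_{i}^{3/2}=\ell_{G,V}$, and employ as a test function the superposition of bumps
\[
\phi_{\varepsilon}(x)=\sum_{g\xi\in G\xi}\upsilon_{V_{i},\varepsilon}\Big(\frac{x-g\xi}{\varepsilon}\Big),
\]
which lies in $H^{1}(\mathbb{R}^{3},\mathbb{R})^{G}$ by construction. Since $\mathrm{supp}(\upsilon_{V_{i},\varepsilon})\subset B(0,1/\sqrt{\varepsilon})$, for $\varepsilon$ small the summands $u_{g}(x):=\upsilon_{V_{i},\varepsilon}((x-g\xi)/\varepsilon)$ have pairwise disjoint supports, contained in balls of radius $\sqrt{\varepsilon}$ around the distinct orbit points. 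A change of variables $y=(x-g\xi)/\varepsilon$, together with the continuity of $V$ at $g\xi$ (where $V=V_{i}$ by $G$-invariance), then shows that each summand contributes $\varepsilon^{3}\|\upsilon_{V_{i},\varepsilon}\|_{V_{i}}^{2}+o(\varepsilon^{3})$ to $\|\phi_{\varepsilon}\|_{\varepsilon,V}^{2}$. Disjointness of the supports also gives $\phi_{\varepsilon}^{2}=\sum_{g}u_{g}^{2}$, so $\mathbb{D}(\phi_{\varepsilon})$ splits into a diagonal part $\sum_{g}\mathbb{D}(u_{g})=(\#G\xi)\,\varepsilon^{5}\mathbb{D}(\upsilon_{V_{i},\varepsilon})$ plus off-diagonal interactions which, because $|x-y|$ stays bounded below by a positive constant for $x\in\mathrm{supp}(u_{g})$, $y\in\mathrm{supp}(u_{g'})$ with $g\xi\neq g'\xi$, are of order $O(\varepsilon^{6})$. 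Combining these estimates with (\ref{enerbumps}) (which implies $\|\upsilon_{V_{i},\varepsilon}\|_{V_{i}}^{2},\mathbb{D}(\upsilon_{V_{i},\varepsilon})\to 4V_{i}^{3/2}E_{1}$) gives
\[
\varepsilon^{-3}\frac{\varepsilon^{2}\|\phi_{\varepsilon}\|_{\varepsilon,V}^{4}}{4\mathbb{D}(\phi_{\varepsilon})}\longrightarrow (\#G\xi)V_{i}^{3/2}E_{1}=\ell_{G,V}E_{1},
\]
which yields the limsup inequality.

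The only step where care is genuinely needed is the off-diagonal bookkeeping for $\mathbb{D}(\phi_{\varepsilon})$: one must use that the minimum distance between distinct points of the finite orbit $G\xi$ is positive, so that $1/|x-y|$ is uniformly bounded on the cross-support pairs and the interaction integrals remain of order $O(\varepsilon^{6})$, subdominant with respect to the leading $\varepsilon^{5}$ term. All the remaining ingredients are already in place from Section \ref{embedding}, so once the interactions are controlled, both inequalities follow by direct computation.
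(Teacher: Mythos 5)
Your proof is correct; note that the paper does not prove this lemma itself but cites Lemma 5.1 of \cite{ccs}, and your argument is precisely the natural one consistent with the constructions the paper does carry out (the scaling $v(x)=w(x/\varepsilon)$ plus $V\ge\inf_{\mathbb{R}^3}V$ for the lower bound, and the $G$-orbit superposition of truncated ground states as in (\ref{psi}) and Lemma \ref{lemin}(c) for the limsup). The bookkeeping is sound --- diagonal terms of $\mathbb{D}$ of order $\varepsilon^{5}$, off-diagonal interactions $O(\varepsilon^{6})$ because the points of the finite orbit $G\xi$ are uniformly separated --- and the existence of a minimizing $\xi\in M_{G,V}$ with finite orbit, which your argument uses, is guaranteed by the standing compactness assumption of Section \ref{embedding}.
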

We fix
$\hat{\rho}>0$ such that%
\begin{equation}%
\begin{cases}
\left\vert y-gy\right\vert >2\hat{\rho} & \text{if $gy\neq y\in
M_{G,W}$},\\
\text{dist}(M_{i},M_{j})>2\hat{\rho} & \text{if $i\neq j$},
\end{cases}
\label{robar}%
\end{equation}
where $G_{i}$, $M_{i}$, $V_i$ are the groups, the
sets and the values defined as in Section \ref{embedding}.

For $\rho\in(0,\hat{\rho}),$ let%
\[
M_{i}^{\rho}=\{y\in\mathbb{R}^{3}\colon\operatorname{dist}(y,M_{i})\leq\rho,\text{
\ }G_{y}=gG_{i}g^{-1}\text{ for some }g\in G\},
\]
and for each $\xi\in M_{i}^{\rho}$ and $\varepsilon>0,$ define%
\[
\theta_{\varepsilon,\xi}(x)=%
\sum_{g\xi\in G\xi}
\omega_{i} \Big (  \frac{x-g\xi}{\varepsilon} \Big )  ,
\]
where $\omega_{i}$ is unique positive ground state of problem (\ref{limlambda}%
) with $\lambda=V_{i}$ which is radially symmetric with respect to the
origin. Set%
\[
\Theta_{\rho,\varepsilon}=\left\{\theta_{\varepsilon,\xi}\mid \xi\in M_{1}^{\rho}%
\cup\cdots \cup M_{m}^{\rho}\right\}.
\]
Arguing as in Proposition 5 in \cite{ccs2}, we can derive the following result.

\begin{proposition}
\label{unique}Given $\rho\in(0,\hat{\rho})$ there exist $d_{\rho}%
>\ell_{G,V}E_{1}$ and $\varepsilon_{\rho}>0$ with the following property: For
every $\varepsilon\in(0,\varepsilon_{\rho})$ and every $v\in\mathcal{M}%
_{\varepsilon,V}^{G}$ with $J_{\varepsilon,V}(v)\leq\varepsilon^{3}d_{\rho}$
there exists precisely one $G$-orbit $G\xi_{\varepsilon,v}$ with
$\xi_{\varepsilon,v}\in M_{1}^{\rho}\cup\cdot\cdot\cdot\cup M_{m}^{\rho}$ such
that%
\[
\varepsilon^{-3}\left\Vert \left\vert v\right\vert -\theta_{\varepsilon
,\xi_{\varepsilon,v}}\right\Vert _{\varepsilon,V}^{2}=\min_{\theta\in
\Theta_{\rho,\varepsilon}}\left\Vert \left\vert v\right\vert -\theta
\right\Vert _{\varepsilon,V}^{2}.
\]
\end{proposition}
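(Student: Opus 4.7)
The plan is to establish existence of a best-approximating $G$-orbit by a direct compactness argument, and then to derive uniqueness by contradiction using a concentration-compactness analysis for near-minimizing sequences. Since $M_{G,V}$ is assumed compact, its closed $\rho$-neighborhood is compact, and hence so is $M_1^\rho \cup \cdots \cup M_m^\rho$; the map $\xi \mapsto \||v|-\theta_{\varepsilon,\xi}\|_{\varepsilon,V}^2$ is continuous on it, so the infimum is attained. A direct change of summation variable in the definition \eqref{psi}-style formula for $\theta_{\varepsilon,\xi}$ gives $\theta_{\varepsilon,g\xi}=\theta_{\varepsilon,\xi}$ for every $g\in G$, so the set of minimizers is automatically $G$-invariant, yielding an optimal orbit $G\xi_{\varepsilon,v}$.

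For uniqueness I argue by contradiction. Suppose there exist $\varepsilon_n\to 0$, $d_n\searrow\ell_{G,V}E_1$, functions $v_n\in\mathcal{M}_{\varepsilon_n,V}^{G}$ with $J_{\varepsilon_n,V}(v_n)\leq\varepsilon_n^{3}d_n$, and two distinct $G$-orbits $G\xi_n^{1}\neq G\xi_n^{2}$ with $\xi_n^{j}\in M_{i_j}^\rho$ both realizing the minimum of $\|\,|v_n|-\theta\|_{\varepsilon_n,V}^{2}$ over $\Theta_{\rho,\varepsilon_n}$. By Lemma \ref{cotas} and the assumed energy bound, $\varepsilon_n^{-3}J_{\varepsilon_n,V}(v_n)\to\ell_{G,V}E_1$, so $(v_n)$ is a near-minimizing sequence on $\mathcal{M}_{\varepsilon_n,V}^{G}$. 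Applying the concentration-compactness analysis of \cite{ccs2} to the real functions $|v_n|\in H^{1}(\mathbb{R}^{3},\mathbb{R})^G$ (admissible via the diamagnetic inequality \eqref{di}), one extracts a unique index $i^{*}\in\{1,\ldots,m\}$ and a sequence $\eta_n$ with $\mathrm{dist}(\eta_n,M_{i^{*}})\to 0$ such that, after the rescaling $w_n(x):=|v_n|(\varepsilon_n x+\eta_n)$, $w_n\to\omega_{i^{*}}$ strongly in $H^{1}(\mathbb{R}^{3})$; by the $G$-invariance of $|v_n|$, the same holds at every $g\eta_n$, and consequently $\varepsilon_n^{-3}\|\,|v_n|-\theta_{\varepsilon_n,\eta_n}\|_{\varepsilon_n,V}^{2}\to 0$.

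The minimizing property of $\xi_n^{j}$ then yields $\varepsilon_n^{-3}\|\theta_{\varepsilon_n,\xi_n^{j}}-\theta_{\varepsilon_n,\eta_n}\|_{\varepsilon_n,V}^{2}\to 0$. Since the profile $\omega_{i^{*}}$ has a unique strict maximum at the origin and decays at infinity, this convergence forces each $\xi_n^{j}$ to lie within distance $o(1)$ of the orbit $G\eta_n$. The separation property \eqref{robar} and the choice $\rho<\hat{\rho}$ then imply $i_1=i_2=i^{*}$ and that both orbits collapse onto $G\eta_\infty$ in the limit. A peak-matching step, comparing the centers of the localized bumps of $\theta_{\varepsilon_n,\xi_n^{j}}$ with those of $\theta_{\varepsilon_n,\eta_n}$ via the nondegeneracy of $\omega_{i^{*}}$ (i.e.\ the sensitivity of the $H^{1}$-distance to translations of the peak), forces $G\xi_n^{1}=G\xi_n^{2}$ for all large $n$, contradicting the assumption. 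The main obstacle is precisely the concentration-compactness step: one needs to establish that every near-minimizer of $J_{\varepsilon_n,V}$ on $\mathcal{M}_{\varepsilon_n,V}^{G}$ concentrates at exactly one $G$-orbit inside a single $M_{i^{*}}$, with convergence strong enough (in the appropriate rescaled $H^{1}$-sense) to make the subsequent uniqueness-of-centers argument quantitative. This is the content imported from \cite{ccs2}, adapted to our symmetry-constrained setting.
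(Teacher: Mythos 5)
Your overall strategy is the one the paper itself intends: the text offers no proof of Proposition \ref{unique} beyond the remark that one argues as in Proposition 5 of \cite{ccs2}, and that argument is precisely the scheme you describe --- existence of a minimizing orbit from compactness of $M_1^\rho\cup\cdots\cup M_m^\rho$ and continuity of $\xi\mapsto\||v|-\theta_{\varepsilon,\xi}\|_{\varepsilon,V}^2$, then uniqueness by contradiction along sequences $\varepsilon_n\to0$, $d_n\searrow\ell_{G,V}E_1$, using the concentration behaviour of low-energy elements of $\mathcal{M}_{\varepsilon_n,V}^{G}$.

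Two points in the uniqueness step are asserted rather than proved and are exactly where the work lies. First, Lemma \ref{cotas} does not by itself give $\varepsilon_n^{-3}J_{\varepsilon_n,V}(v_n)\to\ell_{G,V}E_1$: its lower bound is $(\inf_{\mathbb{R}^3}V)^{3/2}E_1$, which can be strictly smaller than $\ell_{G,V}E_1$ when the relevant orbits have more than one point; the convergence of the energy is an output of the concentration lemma, not an input to it. Second, your final inference ``both orbits collapse onto $G\eta_n$, hence coincide'' does not follow from $o(1)$-closeness. Since the bumps live at scale $\varepsilon_n$, what $\varepsilon_n^{-3}\|\theta_{\varepsilon_n,\xi_n^j}-\theta_{\varepsilon_n,\eta_n}\|_{\varepsilon_n,V}^2\to0$ actually yields --- and what you need --- is $\operatorname{dist}(\xi_n^j,G\eta_n)=o(\varepsilon_n)$, because $\varepsilon^{-3}\bigl\|\omega_i(\tfrac{\cdot-\xi}{\varepsilon})-\omega_i(\tfrac{\cdot-\xi'}{\varepsilon})\bigr\|^2$ is comparable to $\|\omega_i-\omega_i(\cdot-\tfrac{\xi-\xi'}{\varepsilon})\|^2$. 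Even so, two distinct orbits within $o(\varepsilon_n)$ of each other could a priori both realize the minimum; to exclude this you must show that in the rescaled variable $\xi=\eta_n+\varepsilon_n y$ the map $y\mapsto\varepsilon_n^{-3}\||v_n|-\theta_{\varepsilon_n,\eta_n+\varepsilon_n y}\|_{\varepsilon_n,V}^2$ is strictly convex on a fixed ball: its principal part $(\#G\eta_n)\,\|\omega_{i^*}-\omega_{i^*}(\cdot-y)\|^2$ has a nondegenerate Hessian at $y=0$, while the cross term with the remainder $|v_n|-\theta_{\varepsilon_n,\eta_n}$ contributes only $o(1)$ to that Hessian, and the residual isotropy $G_{\eta_n}$ acting on $y$ must be quotiented out. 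That quantitative nondegeneracy computation is the real content of the ``peak-matching step''; until it is carried out (or explicitly imported from the corresponding step in \cite{ccs2}), the uniqueness claim is not established.
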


For every $c\in\mathbb{R}$ we set
\[
J_{\varepsilon,V}^{c}=\left\{v\in\mathcal{M}_{\varepsilon}^{G}\mid J_{\varepsilon,V
}(v)\leq c \right\}.
\]
Proposition \ref{unique} allows us to define, for each $\rho\in(0,\widehat{\rho})$
and $\varepsilon\in(0,\varepsilon_{\rho}),$ a local baryorbit map
\[
\widehat{\beta}_{\rho,\varepsilon,0}\colon J_{\varepsilon,V}^{\varepsilon^{3}d_{\rho}%
}\longrightarrow\left(  M_{1}^{\rho}\cup\cdot\cdot\cdot\cup M_{m}^{\rho
}\right)  /G
\]
by taking
\[
\widehat{\beta}_{\rho,\varepsilon,0}(v)=G\xi_{\varepsilon,v},
\]
where $G\xi_{\varepsilon,v}$ is the unique $G$-orbit given by the previous proposition.

Coming back to our original problem, for every $c\in\mathbb{R}$ set%
\[
J_{\varepsilon,A,V}^{c}=\{u\in\mathcal{N}_{\varepsilon,A,V}^{\tau}\mid J_{\varepsilon,A,V
}(u)\leq c\}.
\]
The following holds.

\begin{corollary}
\label{bary}For each $\rho\in(0,\widehat{\rho})$ and $\varepsilon\in
(0,\varepsilon_{\rho}),$ the local baryorbit map
\[
\widehat{\beta}_{\rho,\varepsilon}\colon J_{\varepsilon,A,V}^{\varepsilon^{3}d_{\rho}%
}\longrightarrow\left(  M_{1}^{\rho}\cup\cdot\cdot\cdot\cup M_{m}^{\rho
}\right)  /G,
\]
given by%
\[
\widehat{\beta}_{\rho,\varepsilon}(u)=\widehat{\beta}_{\rho,\varepsilon
,0}(\hat{\pi}_{\varepsilon}(\left\vert u\right\vert )),
\]
where $\hat{\pi}_{\varepsilon}\colon H^{1}(\mathbb{R}^{3},\mathbb{R})^{G}
\setminus\{0\}\rightarrow\mathcal{M}_{\varepsilon}^{G}$ is the radial
projection, is well defined and continuous. It satisfies%
\begin{align*}
\widehat{\beta}_{\rho,\varepsilon}(\gamma u)  &  =\widehat{\beta}%
_{\rho,\varepsilon}(u)\text{ \ \ \ \ }\forall\gamma\in\mathbb{S}^{1},\\
\widehat{\beta}_{\rho,\varepsilon}(\widehat{\iota}_{\varepsilon}(\xi))  &
=\xi\text{ \ \ \ \ }\forall\xi\in M_{\tau}\text{ with }J_{\varepsilon,A,V}%
(\iota_{\varepsilon}(\xi))\leq\varepsilon^{3}d_{\rho},
\end{align*}
where $\widehat{\iota}_{\varepsilon}$ is the map defined in \emph{Proposition
\ref{ingoing}}.
\end{corollary}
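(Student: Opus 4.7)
The idea is to reduce the magnetic case handled by this corollary to the real-valued setting of Proposition \ref{unique}, by passing to the modulus $|u|$ through the diamagnetic inequality.

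\textbf{Well-definedness.} Given $u\in J_{\varepsilon,A,V}^{\varepsilon^{3}d_{\rho}}$, I first check that $|u|\in H^{1}(\mathbb{R}^{3},\mathbb{R})^{G}\setminus\{0\}$: the $G$-invariance follows from $|u(gx)|=|\tau(g)||u(x)|=|u(x)|$, and $|u|\neq 0$ since $u\in\mathcal{N}_{\varepsilon,A,V}^{\tau}$. The diamagnetic inequality \eqref{di} yields $\||u|\|_{\varepsilon,V}^{2}\leq\|u\|_{\varepsilon,A,V}^{2}$, while $\mathbb{D}(|u|)=\mathbb{D}(u)$. Since on the Nehari manifolds each functional reduces to a quarter of the squared norm, and $\hat{\pi}_{\varepsilon}(|u|)=\varepsilon\||u|\|_{\varepsilon,V}|u|/\sqrt{\mathbb{D}(|u|)}$, a direct computation using the Nehari identity $\varepsilon^{2}\|u\|_{\varepsilon,A,V}^{2}=\mathbb{D}(u)$ gives
\begin{equation*}
J_{\varepsilon,V}(\hat{\pi}_{\varepsilon}(|u|))=\frac{\varepsilon^{2}\||u|\|_{\varepsilon,V}^{4}}{4\mathbb{D}(u)}\leq\frac{\varepsilon^{2}\|u\|_{\varepsilon,A,V}^{4}}{4\mathbb{D}(u)}=\frac{1}{4}\|u\|_{\varepsilon,A,V}^{2}=J_{\varepsilon,A,V}(u)\leq\varepsilon^{3}d_{\rho}.
\end{equation*}
Hence $\hat{\pi}_{\varepsilon}(|u|)$ lies in the sublevel set where Proposition \ref{unique} applies and supplies a unique $G$-orbit $G\xi_{\varepsilon,|u|}\in (M_{1}^{\rho}\cup\cdots\cup M_{m}^{\rho})/G$.

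\textbf{Continuity and $\mathbb{S}^{1}$-invariance.} The map $u\mapsto |u|$ is continuous from $H_{\varepsilon,A}^{1}(\mathbb{R}^{3},\mathbb{C})^{\tau}$ into $H^{1}(\mathbb{R}^{3},\mathbb{R})^{G}$ (a standard consequence of the diamagnetic inequality), and the radial projection $\hat{\pi}_{\varepsilon}$ together with the local baryorbit map $\widehat{\beta}_{\rho,\varepsilon,0}$ are continuous on their domains (the latter by uniqueness in Proposition \ref{unique}). Composition gives the continuity of $\widehat{\beta}_{\rho,\varepsilon}$. The $\mathbb{S}^{1}$-invariance is immediate: for $|\gamma|=1$ one has $|\gamma u|=|u|$, so the construction depends only on the $\mathbb{S}^{1}$-orbit of $u$.

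\textbf{The retraction identity.} For the identity $\widehat{\beta}_{\rho,\varepsilon}(\widehat{\iota}_{\varepsilon}(\xi))=G\xi$, observe that the bumps composing $\psi_{\varepsilon,\xi}$ are supported in balls $B(g\xi,\sqrt{\varepsilon})$ which, by the separation condition \eqref{robar}, are pairwise disjoint for $\varepsilon$ sufficiently small; consequently
\begin{equation*}
|\psi_{\varepsilon,\xi}(x)|=\sum_{g\xi\in G\xi}\upsilon_{i,\varepsilon}\Big(\frac{x-g\xi}{\varepsilon}\Big).
\end{equation*}
Thus $|\widehat{\iota}_{\varepsilon}(\xi)|$ and, after applying $\hat{\pi}_{\varepsilon}$, the function $\hat{\pi}_{\varepsilon}(|\widehat{\iota}_{\varepsilon}(\xi)|)$ are positive multiples of this same sum. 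Because $\upsilon_{i,\varepsilon}\to\omega_{i}$ in $H^{1}$ as $\varepsilon\to 0$ (and the scaling factors converge to $1$), one checks that $\varepsilon^{-3/2}\|\hat{\pi}_{\varepsilon}(|\widehat{\iota}_{\varepsilon}(\xi)|)-\theta_{\varepsilon,\xi}\|_{\varepsilon,V}=o(1)$, uniformly in $\xi\in M_{i}$. The uniqueness statement of Proposition \ref{unique} then pins the minimizing orbit to $G\xi$, giving the claim.

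\textbf{Main obstacle.} The delicate point is the last step: verifying that for $\varepsilon$ small, $\hat{\pi}_{\varepsilon}(|\widehat{\iota}_{\varepsilon}(\xi)|)$ is genuinely \emph{closer} in $\|\cdot\|_{\varepsilon,V}$ to $\theta_{\varepsilon,\xi}$ than to $\theta_{\varepsilon,\xi'}$ for any competing $G\xi'\neq G\xi$ in $M_{1}^{\rho}\cup\cdots\cup M_{m}^{\rho}$. This requires combining the sharp asymptotics in Lemma \ref{lemin}(c), the exponential decay of $\omega_{i}$, and the separation \eqref{robar} to bound the cross-terms, thereby ensuring that the unique minimizer produced by Proposition \ref{unique} really is $G\xi$ rather than some nearby orbit.
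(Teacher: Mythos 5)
Your argument follows the paper's proof in its essential step: the diamagnetic inequality (\ref{di}) gives $J_{\varepsilon,V}(\hat{\pi}_{\varepsilon}(|u|))\leq J_{\varepsilon,A,V}(u)$ (this is precisely (\ref{inf}) in the paper), so $\hat{\pi}_{\varepsilon}(|u|)$ lands in the sublevel set where Proposition \ref{unique} applies, and well-definedness follows. Your explicit computation of the two Nehari values and the $\mathbb{S}^{1}$-invariance via $|\gamma u|=|u|$ are both correct and exactly what the paper has in mind.

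Where you diverge from the paper is in granularity: the paper dismisses continuity and the retraction identity with "it is straightforward to verify" (the construction and the verification being imported from \cite{ccs} and \cite{ccs2}), whereas you try to reconstruct the retraction identity and candidly flag the residual difficulty. Your sketch — disjoint bump supports give $|\psi_{\varepsilon,\xi}|=\sum_{g}\upsilon_{i,\varepsilon}\left(\frac{\cdot-g\xi}{\varepsilon}\right)$, radial projections preserve this structure, and $\hat{\pi}_\varepsilon(|\widehat{\iota}_\varepsilon(\xi)|)$ converges to $\theta_{\varepsilon,\xi}$ in the $\varepsilon^{-3/2}\|\cdot\|_{\varepsilon,V}$ scale — is the right mechanism, and the "main obstacle" you name (ruling out a nearby competing orbit $G\xi'$) is genuine but is exactly what the uniqueness and the $\hat{\rho}$-separation in (\ref{robar}) are engineered to handle, as worked out in the cited earlier papers. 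So this is not a gap in the mathematics so much as a gap in what you can verify blind; your approach is the paper's approach, executed more carefully where the paper leans on prior work. Two minor remarks: (i) writing "$\widehat{\beta}_{\rho,\varepsilon}(\widehat{\iota}_{\varepsilon}(\xi))=G\xi$" rather than "$=\xi$" is the more precise reading, since the target is the orbit space; (ii) continuity of $\widehat{\beta}_{\rho,\varepsilon,0}$ does not follow from uniqueness alone but from uniqueness combined with a compactness argument — you should spell that out if you want a self-contained proof.
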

\begin{proof}
If $u\in\mathcal{N}_{\varepsilon,A,V}^{\tau}$ then $\hat{\pi}_{\varepsilon
}(\left\vert u\right\vert )\in\mathcal{M}_{\varepsilon}^{G}.$ The diamagnetic
inequality yields
\begin{equation}
J_{\varepsilon,V}(\hat{\pi}_{\varepsilon}(\left\vert u\right\vert ))\leq
J_{\varepsilon,A,V}(u). \label{inf}%
\end{equation}
So if $J_{\varepsilon,A,V}(u)\leq\varepsilon^{3}d_{\rho}$ then $\widehat{\beta
}_{\rho,\varepsilon}(u)$ is well defined. It is straightforward to verify that
it has the desired properties.
\end{proof}

\begin{corollary}
\label{infepsilon}If there exists $\xi\in\mathbb{R}^{3}$ such that
$(\#G\xi)V^{3/2}(\xi)=\ell_{G,V}$ and $G_{\xi}\subset\ker\tau$, then%
\[
\lim_{\varepsilon\rightarrow\infty}\varepsilon^{-3}c_{\varepsilon,A,V}^{\tau
}=\ell_{G,V}E_{1},
\]
where $c_{\varepsilon,A,V}^{\tau}=\inf_{{\mathcal{N}_{\varepsilon
,A,V}^{\tau}}
}J_{\varepsilon,A,V}.$
\end{corollary}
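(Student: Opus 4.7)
The limit in the statement should read $\varepsilon\to 0^{+}$, consistent with the semiclassical scaling used throughout the paper; I argue in this direction. The plan is to sandwich $\varepsilon^{-3}c_{\varepsilon,A,V}^{\tau}$ between two matching bounds.

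\emph{Upper bound.} The hypothesis guarantees that $M_{\tau}\neq\emptyset$, so I can pick $\xi\in M_{i}$ for some $i$ with $G_{i}\subset\ker\tau$. By Proposition~\ref{ingoing} the element $\widehat{\iota}_{\varepsilon}(\xi)=\pi_{\varepsilon,A,V}(\psi_{\varepsilon,\xi})$ lies in $\mathcal{N}_{\varepsilon,A,V}^{\tau}$, hence $c_{\varepsilon,A,V}^{\tau}\le J_{\varepsilon,A,V}(\widehat{\iota}_{\varepsilon}(\xi))$. Lemma~\ref{lemin}(c) then yields
\[
\limsup_{\varepsilon\to0}\varepsilon^{-3}c_{\varepsilon,A,V}^{\tau}\le\lim_{\varepsilon\to0}\varepsilon^{-3}J_{\varepsilon,A,V}(\widehat{\iota}_{\varepsilon}(\xi))=\ell_{G,V}E_{1}.
\]

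\emph{Lower bound via diamagnetism.} For $u\in\mathcal{N}_{\varepsilon,A,V}^{\tau}$, the relation $u(gx)=\tau(g)u(x)$ and $|\tau(g)|=1$ give $|u|\in H^{1}(\mathbb{R}^{3},\mathbb{R})^{G}$. Combining the diamagnetic inequality \eqref{di} with the obvious identity $\mathbb{D}(|u|)=\mathbb{D}(u)$ shows
\[
\bigl\||u|\bigr\|_{\varepsilon,V}^{2}\le\|u\|_{\varepsilon,A,V}^{2},\qquad J_{\varepsilon,V}\bigl(\hat{\pi}_{\varepsilon}(|u|)\bigr)\le J_{\varepsilon,A,V}(u),
\]
exactly as in the proof of Corollary~\ref{bary}. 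Taking infima yields $c_{\varepsilon,V}^{G}\le c_{\varepsilon,A,V}^{\tau}$, so it remains to establish
\[
\liminf_{\varepsilon\to0}\varepsilon^{-3}c_{\varepsilon,V}^{G}\ge\ell_{G,V}E_{1}.
\]

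\emph{Lower bound for the real $G$-invariant problem.} Fix $\rho\in(0,\hat{\rho})$ and consider any near-minimizing sequence $v_{\varepsilon}\in\mathcal{M}_{\varepsilon,V}^{G}$; for $\varepsilon$ small enough the $\limsup$ bound in Lemma~\ref{cotas} yields $J_{\varepsilon,V}(v_{\varepsilon})\le\varepsilon^{3}d_{\rho}$, so Proposition~\ref{unique} applies and $v_{\varepsilon}$ concentrates near a unique $G$-orbit $G\xi_{\varepsilon}$ with $\xi_{\varepsilon}\in M_{i(\varepsilon)}^{\rho}$ for some index $i(\varepsilon)\in\{1,\dots,m\}$. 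Rescaling around each point of $G\xi_{\varepsilon}$ and exploiting the uniform separation in \eqref{robar}, the $G$-invariance forces $\#G\xi_{\varepsilon}=|G/G_{i(\varepsilon)}|$ disjoint bubbles; a standard Brezis--Lieb / concentration-compactness argument for the Choquard functional (in the spirit of \cite{ccs2}) shows that each bubble contributes asymptotically at least the ground-state energy $V_{i(\varepsilon)}^{3/2}E_{1}$, whence
\[
\varepsilon^{-3}J_{\varepsilon,V}(v_{\varepsilon})\ge(\#G\xi_{\varepsilon})\,V(\xi_{\varepsilon})^{3/2}E_{1}-o_{\varepsilon}(1)\ge\ell_{G,V}E_{1}-o_{\rho}(1)-o_{\varepsilon}(1).
\]
Letting $\varepsilon\to0$ and then $\rho\to0$ concludes the liminf bound and hence the corollary.

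The main obstacle is the final step: making precise that the concentration profile of $v_{\varepsilon}$ around each point of its $G$-orbit converges strongly (after rescaling) to a ground state $\omega_{V_{i}}$ of the limit problem~\eqref{limlambda} with $\lambda=V_{i}$. The compactness of $M_{G,V}$ and the separation inequalities~\eqref{robar} make the bubbles disjoint, while continuity of $V$ at $\xi_{\varepsilon}$ handles the potential-dependence, but the nonlocal Riesz interaction requires one to estimate cross terms $\iint |u_{j}(x)|^{2}|u_{k}(y)|^{2}/|x-y|$ between different bubbles and show they vanish in the limit; this is the technical heart where the argument departs from the purely local case.
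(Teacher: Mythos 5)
Your overall structure matches the paper's proof exactly: the upper bound comes from Proposition~\ref{ingoing} (equivalently Lemma~\ref{lemin}(c)), the lower bound comes from the diamagnetic comparison $c_{\varepsilon,V}^{G}\le c_{\varepsilon,A,V}^{\tau}$ (inequality~\eqref{inf}), and you correctly flag the typo $\varepsilon\to\infty$ for $\varepsilon\to0$. The one place you diverge is the final ingredient $\liminf_{\varepsilon\to0}\varepsilon^{-3}c_{\varepsilon,V}^{G}\ge\ell_{G,V}E_{1}$: the paper simply asserts that $\varepsilon^{-3}c_{\varepsilon,V}^{G}\to\ell_{G,V}E_{1}$ (Lemma~\ref{cotas} only records the $\limsup$ half in this paper; the matching $\liminf$ is cited from the companion paper \cite{ccs}), whereas you attempt to re-derive it through Proposition~\ref{unique} and a concentration-compactness/Brezis--Lieb decomposition. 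That sketch is plausible and is essentially what \cite{ccs} carries out, but you yourself leave its technical core (the vanishing of the nonlocal cross terms between bubbles, and the strong convergence of the rescaled profiles to $\omega_{V_i}$) unfinished, and Proposition~\ref{unique} by itself only locates a baryorbit; it does not directly produce an energy lower bound. So your plan is correct in shape and more explicit than the paper's one-line proof, but the step you identify as the ``technical heart'' is a genuine gap in the proposal as written and is precisely the part the paper delegates to \cite{ccs} rather than proving.
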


\begin{proof}
Inequality (\ref{inf}) yields $c_{\varepsilon,V}^{G}=\inf_{\mathcal{M}%
_{\varepsilon,V}^{G}}J_{\varepsilon,V}\leq\inf_{\mathcal{N}_{\varepsilon
,A,V}^{\tau}}J_{\varepsilon,A,V}=c_{\varepsilon,A,V}^{\tau}.$ 
        By statement (c) of Lemma \ref{lemin},
\[
\ell_{G,V}E_{1}=\lim_{\varepsilon\rightarrow\infty}\varepsilon^{-3}%
c_{\varepsilon,V}^{G}\leq\liminf_{\varepsilon\rightarrow0}\varepsilon
^{-3}c_{\varepsilon,A,V}^{\tau}\leq\limsup_{\varepsilon\rightarrow\infty
}\varepsilon^{-3}c_{\varepsilon,A,V}^{\tau}\leq\ell_{G,V}E_{1}. \qedhere
\]
\end{proof}

\section{Multiplicity results via Equivariant Morse theory}

We start by reviewing some well known facts on equivariant
Morse theory. We refer the reader to \cite{chang,wa} for further details.

\begin{definition}
Let $\Gamma$ be a compact Lie group and $X$ be a $\Gamma$-space.
\begin{itemize}
\item The $\Gamma
$-orbit of a point $x\in X$ is the set $\Gamma x:=\{\gamma x \mid \gamma\in \Gamma\}$.
\item A subset $A$ of $X$ is said to be $\Gamma$-invariant if $\Gamma
x\subset A$ for every $x\in A$. The $\Gamma$-orbit space of $A$ is the set
$A/\Gamma:=\{\Gamma x:x\in A\}$ with the quotient space topology.
\item $X$ is
called a free $\Gamma$-space if $\gamma x\neq x$ for every $\gamma\in\Gamma$,
$x\in X$.
\item  A map $f\colon X\rightarrow Y$ between $\Gamma$-spaces is called $\Gamma
$-invariant if $f$ is constant on each $\Gamma$-orbit of $X$, and it is called
$\Gamma$-equivariant if $f(\gamma x)=\gamma f(x)$ for every $\gamma\in\Gamma$, $x\in X$.
\end{itemize}
\end{definition}
 
We fix a field ${\mathbb{K}}$ and denote by $\mathcal{H}^{\ast}(X,A)$ the
Alexander-Spanier cohomology of the pair $(X,A)$ with coefficients in
${\mathbb{K}}$. If $X$ is a $\Gamma$-pair, i.e. if $X$ is a $\Gamma$-space and
$A$ is a $\Gamma$-invariant subset of $X,$ we write
\[
\mathcal{H}_{\Gamma}^{\ast}(X,A):=\mathcal{H}^{\ast}(E\Gamma\times_{\Gamma
}X,E\Gamma\times_{\Gamma}A)
\]
for the Borel-cohomology that pair. $E\Gamma$ is the total space of the
classifying $\Gamma$-bundle and $E\Gamma\times_{\Gamma}X$ is the orbit space
$\left(  E\Gamma\times X\right)  /\Gamma$ (see e.g. \cite[Chapter III]{tD}).
If $X$ is a free $\Gamma$-space, as will be the case in our application, then
the projection $E\Gamma\times_{\Gamma}X\rightarrow X/\Gamma$ is a homotopy
equivalence and it induces an isomorphism
\begin{equation} \label{eq:5.21}
\mathcal{H}_{\Gamma}^{\ast}(X,A)\cong\mathcal{H}^{\ast}(X/\Gamma,A/\Gamma).
\end{equation}
  In our setting, $\Gamma = \mathbb{S}^1$; if $A\subset X$ are $\mathbb{S}^{1}$-invariant subsets of
$\mathcal{N}_{\varepsilon,A,V}^{\tau}$ we denote by $X/\mathbb{S}^{1}$ and
$A/\mathbb{S}^{1}$ their $\mathbb{S}^{1}$-orbit spaces and by (\ref{eq:5.21}) it is legitimate to write
\[
\mathcal{H}_{\mathbb{S}^{1}}^{\ast}(X,A) \simeq \mathcal{H}^{\ast}(X/\mathbb{S}%
^{1},A/\mathbb{S}^{1}).
\]
If $\mathbb{S}^{1}u$ is an isolated critical $\mathbb{S}^{1}$-orbit of
$J_{\varepsilon,A,V}$ its $k$\emph{-th critical group} is defined as%
\[
C_{\mathbb{S}^{1}}^{k}(J_{\varepsilon,A,V},\mathbb{S}^{1}u)=\mathcal{H}%
_{\mathbb{S}^{1}}^{k}(J_{\varepsilon,A,V}^{c}\cap U,(J_{\varepsilon,A,V}%
^{c}\smallsetminus\mathbb{S}^{1}u)\cap U),
\]
where $U$ is an $\mathbb{S}^{1}$-invariant neighborhood of $\mathbb{S}^{1}u$
in $\mathcal{N}_{\varepsilon,A,V}^{\tau},$ $c=J_{\varepsilon,A,V}(u)$.
Its total dimension%
\[
\mu(J_{\varepsilon},\mathbb{S}^{1}u)=
{\sum\limits_{k=0}^{\infty}} 
\dim C_{\mathbb{S}^1}^{k}(J_{\varepsilon,A,V},\mathbb{S}^{1}u)
\]
is called the \emph{multiplicity of }$\mathbb{S}^{1}u$. If $\mathbb{S}^{1}u$
is nondegenerate and $J_{\varepsilon,A,V}$ satisfies the Palais-Smale condition in
some neighborhood of $c,$ then 
        $$
        \dim C_{\mathbb{S}^1}^{k}(J_{\varepsilon,A,V} 
,\mathbb{S}^{1}u)=1
        $$
         if $k$ is the Morse index of $J_{\varepsilon,A,V}$ at the
critical submanifold $\mathbb{S}^{1}u$ of $\mathcal{N}_{\varepsilon,A,V}^{\tau}$
and it is $0$ otherwise.

Moreover, for
$\rho>0$ we set 
\[
B_{\rho}M_{\tau}=\{x\in\mathbb{R}^{3}\mid \text{dist}(x,M_{\tau})\leq\rho\}
\]
and write $i_{\rho}\colon M_{\tau}/G\hookrightarrow B_{\rho}M_{\tau}/G$ for the
embedding of the $G$-orbit space of $M_{\tau}$ in that of $B_{\rho}M_{\tau}$.
We will show that this embedding has an effect on the number of solutions of
(\ref{prob}) for $\varepsilon$ small enough.

\begin{lemma}
\label{dim}For every $\rho\in(0,\widehat{\rho})$ and $d\in(\ell_{G,V}
         E_1,d_{\rho}),$ with $d_{\rho}$ as in \emph{Proposition \ref{unique}}, there
exists $\varepsilon_{\rho,d}>0$ such that%
\[
\dim\mathcal{H}^{k}(J_{\varepsilon,A,V}^{\varepsilon^{3}d}/\mathbb{S}^{1}%
)\geq\text{\emph{rank}}\left(  i_{\rho}^{\ast}\colon \mathcal{H}^{k}(B_{\rho}%
M_{\tau}/G)\rightarrow\mathcal{H}^{k}(M_{\tau}/G)\right)
\]
for every $\varepsilon\in(0,\varepsilon_{\rho,d})$ and $k\geq0,$ where
$i_{\rho}\colon M_{\tau}/G\hookrightarrow B_{\rho}M_{\tau}/G$ is the inclusion map.
\end{lemma}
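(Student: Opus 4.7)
The plan is to realize the inclusion $i_\rho$ on orbit spaces as a composition that factors through $J_{\varepsilon,A,V}^{\varepsilon^{3}d}/\mathbb{S}^1$; the inequality then follows from the contravariance of Alexander--Spanier cohomology.

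First I set $\varepsilon_{\rho,d}=\min\{\varepsilon_d,\varepsilon_\rho\}$, where $\varepsilon_d$ is supplied by Proposition \ref{ingoing} for the chosen $d>\ell_{G,V}E_1$ and $\varepsilon_\rho$ by Proposition \ref{unique}. Since $d<d_\rho$, for $\varepsilon\in(0,\varepsilon_{\rho,d})$ the map $\widehat{\iota}_\varepsilon$ sends $M_\tau$ into $J_{\varepsilon,A,V}^{\varepsilon^3 d}$, which is in turn contained in the domain $J_{\varepsilon,A,V}^{\varepsilon^3 d_\rho}$ of $\widehat{\beta}_{\rho,\varepsilon}$.

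Next, I pass to orbit spaces. The identity $\tau(g)\widehat{\iota}_\varepsilon(g\xi)=\widehat{\iota}_\varepsilon(\xi)$ from Proposition \ref{ingoing} yields $\widehat{\iota}_\varepsilon(g\xi)\in\mathbb{S}^1\widehat{\iota}_\varepsilon(\xi)$, so $\widehat{\iota}_\varepsilon$ induces a continuous
\[
\bar\iota_\varepsilon\colon M_\tau/G \longrightarrow J_{\varepsilon,A,V}^{\varepsilon^3 d}/\mathbb{S}^1,
\]
and the $\mathbb{S}^1$-invariance of $\widehat{\beta}_{\rho,\varepsilon}$ recorded in Corollary \ref{bary} delivers a continuous
\[
\bar\beta_{\rho,\varepsilon}\colon J_{\varepsilon,A,V}^{\varepsilon^3 d}/\mathbb{S}^1 \longrightarrow B_\rho M_\tau/G,
\]
once one checks that the baryorbit of a $\tau$-equivariant $u$ in such a low-energy sublevel actually lies in some $M_i^\rho$ with $G_i\subset\ker\tau$; a concentration orbit whose isotropy is not contained in $\ker\tau$ would force $|u|$ to deviate from the template $\theta_{\varepsilon,\xi_{\varepsilon,u}}$ enough to push the energy above $\varepsilon^3 d$. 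The second identity in Corollary \ref{bary} then gives $\bar\beta_{\rho,\varepsilon}\circ\bar\iota_\varepsilon=i_\rho$, and applying the contravariant functor $\mathcal{H}^k(\,\cdot\,;\mathbb{K})$ produces the factorization
\[
i_\rho^\ast=\bar\iota_\varepsilon^\ast\circ\bar\beta_{\rho,\varepsilon}^\ast\colon \mathcal{H}^k(B_\rho M_\tau/G)\longrightarrow\mathcal{H}^k(J_{\varepsilon,A,V}^{\varepsilon^3 d}/\mathbb{S}^1)\longrightarrow\mathcal{H}^k(M_\tau/G),
\]
so that $\operatorname{rank}(i_\rho^\ast)\leq\operatorname{rank}(\bar\iota_\varepsilon^\ast)\leq\dim\mathcal{H}^k(J_{\varepsilon,A,V}^{\varepsilon^3 d}/\mathbb{S}^1)$.

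The main obstacle is precisely the parenthetical claim that $\widehat{\beta}_{\rho,\varepsilon}$ carries $\tau$-equivariant elements of $J_{\varepsilon,A,V}^{\varepsilon^{3}d}$ into $B_\rho M_\tau/G$, rather than the a priori larger orbit space $(M_1^\rho\cup\cdots\cup M_m^\rho)/G$. This is a $\tau$-equivariant refinement of Proposition \ref{unique}, which I expect to obtain by contradiction: combining the diamagnetic comparison $J_{\varepsilon,A,V}(u)\geq J_{\varepsilon,V}(\widehat{\pi}_\varepsilon(|u|))$ with the observation that a $\tau$-equivariant profile concentrating at a $G$-orbit with isotropy not contained in $\ker\tau$ must vanish on that orbit, and hence exhibits an energy excess incompatible with $J_{\varepsilon,A,V}(u)\leq\varepsilon^3 d$ once $d$ is close to $\ell_{G,V}E_1$ and $\varepsilon$ is small, thanks to Lemma \ref{cotas}.
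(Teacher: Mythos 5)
Your overall strategy --- factor $i_{\rho}^{\ast}$ through $\mathcal{H}^{k}(J_{\varepsilon,A,V}^{\varepsilon^{3}d}/\mathbb{S}^{1})$ by composing the induced maps $\iota_{\varepsilon}$ and $\beta_{\rho,\varepsilon}$ on orbit spaces and using contravariance --- is exactly the paper's, and the choice $\varepsilon_{\rho,d}=\min\{\varepsilon_{d},\varepsilon_{\rho}\}$ is the same. But you have correctly located, and then not closed, the one nontrivial step. Corollary \ref{bary} only gives a map $\widehat{\beta}_{\rho,\varepsilon}$ into $\left(M_{1}^{\rho}\cup\cdots\cup M_{m}^{\rho}\right)/G=B_{\rho}M_{G,V}/G$, and your claim that on the $\tau$-equivariant sublevel set it actually lands in the smaller space $B_{\rho}M_{\tau}/G$ is an unproved strengthening of Proposition \ref{unique}. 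Your sketched justification --- a $\tau$-equivariant $u$ must vanish where the isotropy is not contained in $\ker\tau$, hence concentration there costs energy --- does not go through as stated: for a closed subgroup of $O(3)$ the fixed-point set of a nontrivial isotropy element can be a line in $\mathbb{R}^{3}$, a set of zero $H^{1}$-capacity, so vanishing of $|u|$ on it places no $H^{1}$-obstruction to $|u|$ being close to a positive bump centered on it. Turning your heuristic into a proof would require a genuinely new quantitative lemma (involving, e.g., the phase winding of $\tau$-equivariant functions), which is nowhere in the paper.

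The paper avoids the issue entirely by a topological device. It lets $\beta_{\rho,\varepsilon}$ map $J_{\varepsilon,A,V}^{\varepsilon^{3}d}/\mathbb{S}^{1}$ into all of $B_{\rho}M_{G,V}/G$ and observes that the composite satisfies $\beta_{\rho,\varepsilon}\circ\iota_{\varepsilon}=i_{\tau,\rho}\circ i_{\rho}$, where $i_{\tau,\rho}\colon B_{\rho}M_{\tau}/G\hookrightarrow B_{\rho}M_{G,V}/G$ is the inclusion. Because $\rho<\widehat{\rho}$ and the $M_{i}$ are at mutual distance greater than $2\widehat{\rho}$, the sets $B_{\rho}M_{\tau}$ and $B_{\rho}(M_{G,V}\smallsetminus M_{\tau})$ are disjoint, so $B_{\rho}M_{\tau}/G$ is a union of components and $i_{\tau,\rho}^{\ast}$ is an epimorphism in cohomology; hence $\operatorname{rank}\bigl((\beta_{\rho,\varepsilon}\circ\iota_{\varepsilon})^{\ast}\bigr)=\operatorname{rank}(i_{\rho}^{\ast}\circ i_{\tau,\rho}^{\ast})=\operatorname{rank}(i_{\rho}^{\ast})$, and the desired bound follows. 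You should replace your parenthetical analytic claim by this argument; everything else in your write-up then stands.
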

         
\begin{proof}[\bf Proof.]
Let $\varepsilon_{\rho,d}=\min\{\varepsilon_{d},\varepsilon_{\rho}\}$ where
$\varepsilon_{\rho}$ is as in Proposition \ref{unique} and $\varepsilon_{d}%
$\ is as in Proposition \ref{ingoing}.\ Fix $\varepsilon\in(0,\varepsilon
_{\rho,d}).$ Then,%
\[
J_{\varepsilon,A,V}(\widehat{\iota}_{\varepsilon}(\xi))\leq\varepsilon
^{3}d\text{ \ \ and \ }\widehat{\beta}_{\rho,\varepsilon}(\widehat{\iota
}_{\varepsilon}(\xi))=\xi\text{ \ \ \ \ \ }\forall\xi\in M_{\tau}.
\]
By Proposition \ref{ingoing} and Corollary \ref{bary} the maps%
\[
M_{\tau}/G\overset{\iota_{\varepsilon}}{\longrightarrow}J_{\varepsilon,A,V
}^{{\varepsilon}^{3} d}/\mathbb{S}^{1}\overset{\beta_{\rho,\varepsilon}%
}{\longrightarrow}B_{\rho} M/G
\]
given by $\iota_{\varepsilon}(G\xi)=\widehat{\iota}_{\varepsilon}(\xi)$ and
$\beta_{\rho,\varepsilon}(\mathbb{S}^{1}u)=\widehat{\beta}_{\rho,\varepsilon
}(u)$ are well defined and satisfy $\beta_{\rho,\varepsilon}(\iota
_{\varepsilon}(G\xi))=G\xi$ for all\ $\xi\in M_{\tau}$. Note that 
         $M_{\tau}= {\textstyle\bigcup} 
\{M_{i}\mid G_{i}\subset\ker\tau\}$ is the union of some connected components of
$M.$ Moreover, our choice of $\widehat{\rho}$ implies that $B_{\rho}M_{\tau}\cap
B_{\rho}\left(  M\smallsetminus M_{\tau}\right)  =\emptyset.$ Therefore the
inclusion $i_{\tau,\rho}\colon B_{\rho}M_{\tau}/G\hookrightarrow B_{\rho}M/G$
induces an epimorphism in cohomology. Since $\beta_{\rho,\varepsilon}%
\circ\iota_{\varepsilon}=i_{\tau,\rho}\circ i_{\rho}$ we conclude that
\begin{align*}
\dim\mathcal{H}^{k}(J_{\varepsilon,A,V}^{\varepsilon^{3}d}/\mathbb{S}^{1})  &
\geq\text{rank}(\iota_{\varepsilon}^{\ast}\colon \mathcal{H}^{k}(J_{\varepsilon
}^{\varepsilon^{3}d}/\mathbb{S}^{1})\rightarrow\mathcal{H}_{k}(M_{\tau}/G))\\
&  \geq\text{rank}((\beta_{\rho,\varepsilon}\circ\iota_{\varepsilon})^{\ast
}\colon\mathcal{H}^{k}(B_{\rho}M/G)\rightarrow\mathcal{H}_{k}(M_{\tau}/G))\\
&  =\text{rank}\left(  i_{\rho}^{\ast}\colon\mathcal{H}^{k}(B_{\rho}M_{\tau
}/G)\rightarrow\mathcal{H}^{k}(M_{\tau}/G)\right)  ,
\end{align*}
as claimed.
\end{proof}

We are ready to prove our main theorem.

\begin{theorem}
\label{mainthm}
Assume there exists $\alpha>0$ such that the set%
\begin{equation}
\{x\in\mathbb{R}^{3}\mid (\#Gx) V^{3/2}(x)\leq\ell_{G,V}%
+\alpha\}. \label{compactness}%
\end{equation}
is compact. Then, given $\rho>0$ and $\delta\in(0,\alpha),$ there exists
$\bar{\varepsilon}>0$ such that for every $\varepsilon \in (0, \bar \varepsilon)$ one of the following
two assertions holds:
\begin{itemize}
\item[(a)] $J_{\varepsilon,A,V}$ has a nonisolated $\tau
$-intertwining critical $\mathbb{S}^{1}$-orbit in the set  $J_{\varepsilon,A,V}%
^{-1}[\varepsilon^{3}(\ell_{G,V} E_1 -\delta),\varepsilon^{3}%
(\ell_{G,V} E_1  + \delta)]$.
\item[(b)] $J_{\varepsilon,A,V}$ has
finitely many $\tau$-intertwining critical $\mathbb{S}^{1}$-orbits
$\mathbb{S}^{1}u_{1}$, $\mathbb{S}^{1}u_{2}$, \ldots, $\mathbb{S}^{1}u_{m}$ in $J_{\varepsilon,A,V}%
^{-1}[\varepsilon^{3}(\ell_{G,V} E_1 -\delta),\varepsilon^{3}
(\ell_{G,V} E_1 +\delta)].$ They satisfy
\[
{\textstyle\sum\limits_{j=1}^{m}}
\dim C_{\mathbb{S}^{1}}^{k}(J_{\varepsilon,A,V},\mathbb{S}^{1}u_{j})\geq
\text{\emph{rank}}(i_{\rho}^{\ast}:\mathcal{H}^{k}(B_{\rho}M_{\tau
}/G)\rightarrow\mathcal{H}^{k}(M_{\tau}/G))
\]
for every $k\geq0.$
\end{itemize}
In particular, if every $\tau$-intertwining critical $\mathbb{S}^{1}$-orbit of
$J_{\varepsilon,A,V}$ in the set $J_{\varepsilon,A,V}^{-1}[\varepsilon^{3}(\ell_{G,V}
E_1 -\delta),\varepsilon^{3}(\ell_{G,V} E_1
+\delta)]$ is nondegenerate then, for every $k\geq0,$ there are at least
\[
\text{\emph{rank}}(i_{\rho}^{\ast}\colon \mathcal{H}^{k}(B_{\rho}M_{\tau
}/G)\rightarrow\mathcal{H}^{k}(M_{\tau}/G))
\]
of them having Morse index $k$ for every $k\geq0$.
\end{theorem}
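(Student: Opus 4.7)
The plan is to prove the theorem by a dichotomy: either the critical set in the energy strip is not isolated, giving alternative (a), or it is, in which case equivariant Morse inequalities combined with Lemma \ref{dim} force alternative (b). Write $c^{\pm}_{\varepsilon}=\varepsilon^{3}(\ell_{G,V}E_{1}\pm\delta)$. The first step is to verify that for $\varepsilon$ small enough, $J_{\varepsilon,A,V}$ satisfies $(PS)_{c}$ for every $c\in[c^{-}_{\varepsilon},c^{+}_{\varepsilon}]$. The compactness hypothesis (\ref{compactness}) forces $\min_{x\in\mathbb{R}^{3}\setminus\{0\}}(\#Gx)V_{\infty}^{3/2}\geq \ell_{G,V}+\alpha$, so (for $\delta$ chosen suitably small) the strip lies strictly below the threshold of Proposition \ref{palaissmale}. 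A second preliminary is to note that Corollary \ref{infepsilon} gives $\varepsilon^{-3}c^{\tau}_{\varepsilon,A,V}\to \ell_{G,V}E_{1}$, so for $\varepsilon$ small $c^{\tau}_{\varepsilon,A,V}>c^{-}_{\varepsilon}$ and therefore $J^{c^{-}_{\varepsilon}}_{\varepsilon,A,V}=\emptyset$.

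Next, assume that (a) fails, so every $\tau$-intertwining critical $\mathbb{S}^{1}$-orbit in $J^{-1}_{\varepsilon,A,V}[c^{-}_{\varepsilon},c^{+}_{\varepsilon}]$ is isolated. Since $(PS)_{c}$ holds along the compact interval, there are only finitely many such orbits; call them $\mathbb{S}^{1}u_{1},\ldots,\mathbb{S}^{1}u_{m}$. Because $0\notin\mathcal{N}^{\tau}_{\varepsilon,A,V}$, the $\mathbb{S}^{1}$-action is free there, and by (\ref{eq:5.21}) equivariant Borel cohomology coincides with ordinary cohomology of the orbit space; so the machinery of equivariant Morse theory reduces to ordinary Morse theory on $\mathcal{N}^{\tau}_{\varepsilon,A,V}/\mathbb{S}^{1}$. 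Fix $d\in(\ell_{G,V}E_{1},\min(d_{\rho},\ell_{G,V}E_{1}+\delta))$, with $d_{\rho}$ from Proposition \ref{unique}, so that Lemma \ref{dim} is applicable and $\varepsilon^{3}d\leq c^{+}_{\varepsilon}$. Applying the standard Morse inequalities to the pair $(J^{\varepsilon^{3}d}_{\varepsilon,A,V},J^{c^{-}_{\varepsilon}}_{\varepsilon,A,V})=(J^{\varepsilon^{3}d}_{\varepsilon,A,V},\emptyset)$ yields, for every $k\geq 0$,
\[
\sum_{j\,:\,J_{\varepsilon,A,V}(u_{j})\leq \varepsilon^{3}d}\dim C^{k}_{\mathbb{S}^{1}}(J_{\varepsilon,A,V},\mathbb{S}^{1}u_{j})\;\geq\;\dim\mathcal{H}^{k}\bigl(J^{\varepsilon^{3}d}_{\varepsilon,A,V}/\mathbb{S}^{1}\bigr).
\]
Lemma \ref{dim} bounds the right-hand side below by $\mathrm{rank}(i_{\rho}^{\ast})$. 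Since the orbits appearing in the sum form a subset of $\{\mathbb{S}^{1}u_{1},\ldots,\mathbb{S}^{1}u_{m}\}\subset J^{-1}_{\varepsilon,A,V}[c^{-}_{\varepsilon},c^{+}_{\varepsilon}]$, extending the sum to all $j=1,\ldots,m$ only enlarges it, which gives the desired inequality in (b).

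For the last assertion, if every critical $\mathbb{S}^{1}$-orbit in the strip is nondegenerate, then as recalled just before Lemma \ref{dim} each $\mathbb{S}^{1}u_{j}$ contributes exactly $1$ to $\dim C^{k}_{\mathbb{S}^{1}}(J_{\varepsilon,A,V},\mathbb{S}^{1}u_{j})$ in the degree $k$ equal to its Morse index, and $0$ otherwise. Hence the lower bound on the sum at degree $k$ translates directly into the stated lower bound on the number of orbits of Morse index $k$.

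The principal technical obstacle I expect is the rigorous implementation of the equivariant deformation lemma underlying the Morse inequality: one must verify that a negative pseudo-gradient flow for $J_{\varepsilon,A,V}$ can be constructed tangent to the Nehari manifold $\mathcal{N}^{\tau}_{\varepsilon,A,V}$, is $\mathbb{S}^{1}$-equivariant, and allows one to deform $J^{\varepsilon^{3}d}_{\varepsilon,A,V}$ past the isolated critical orbits using only (PS) at the relevant levels. This is routine given the $C^{2}$-regularity established after the second-derivative formula and the freeness of the action (so isotropy is trivial and the quotient is a $C^{2}$-Banach manifold), but needs care because of the nonlocal nonlinearity and the fact that the ambient functional is defined only on the complex Hilbert space of $H^{1}_{\varepsilon,A}$-type.
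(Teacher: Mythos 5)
Your proposal is correct and follows essentially the same route as the paper: establish $(PS)_c$ below the threshold via the compactness hypothesis, use Corollary \ref{infepsilon} to locate the infimum above $\varepsilon^{3}(\ell_{G,V}E_{1}-\delta)$, and, assuming all critical orbits in the strip are isolated, apply the equivariant Morse inequalities (Theorem 7.6 of \cite{chang}) on the sublevel set $J_{\varepsilon,A,V}^{\varepsilon^{3}d}$ together with Lemma \ref{dim}. Your explicit observations that $J_{\varepsilon,A,V}^{c_{\varepsilon}^{-}}=\emptyset$ and that extending the sum from the orbits below $\varepsilon^{3}d$ to all orbits in the strip only enlarges it are small clarifications of steps the paper leaves implicit, and your caveat that $\delta$ must be taken small enough relative to $\alpha E_{1}$ correctly flags the same adjustment the paper makes in its proof.
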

\begin{proof}
Assume $M_{\tau}%
\neq\emptyset$ and let $\rho>0$ and $\delta\in(0,\alpha E_1 )$
be given. Without loss of generality we may assume that $\rho\in(0,\bar{\rho
})$. Assumption (\ref{compactness}) implies that
\[
\ell_{G,V}+\alpha\leq\min_{x\in\mathbb{R}^{3}\smallsetminus\{0\}}(\#Gx) V_{\infty}^{3/2}%
\]
where $V_{\infty}=\limsup_{|x|\rightarrow\infty}V(x)$. By Proposition \ref{palaissmale} the
functional 
\[
J_{\varepsilon,A,V}\colon \mathcal{N}_{\varepsilon,A,V}^{\tau}\rightarrow
\mathbb{R}
\] 
satisfies (PS)$_{c}$ at each level $c\leq  \varepsilon^{3}
(\ell_{G,V} E_1 +\delta)$ for every $\varepsilon>0.$ By Corollary \ref{infepsilon} there
exists $\varepsilon_{0}>0$ such that%
\[
\ell_{G,V} E_1 -\delta<\varepsilon^{-3}\inf_{u\in\mathcal{N}
_{\varepsilon}^{\tau}}J_{\varepsilon,A,V}\quad\forall\varepsilon
\in(0,\varepsilon_{0}).
\]
Let $d\in(\ell_{G,V} E_1 ,\min\{d_{\rho},\ell_{G,V} E_1
+\delta\})$ with $d_{\rho}$ as in Proposition \ref{unique}, and
$\overline{\varepsilon}=\min\{\varepsilon_{0},\varepsilon_{\rho,d}\}$ with
$\varepsilon_{\rho,d}$ as in Lemma \ref{dim}. Fix $\varepsilon\in
(0,\overline{\varepsilon})$ and for $u\in\mathcal{N}_{\varepsilon,A,V}^{\tau}$
with $J_{\varepsilon,A,V}(u)=c$ set%
\[
C_{\mathbb{S}^{1}}^{k}(J_{\varepsilon,A,V},\mathbb{S}^{1}u)=\mathcal{H}%
^{k}((J_{\varepsilon,A,V}^{c}\cap U)/\mathbb{S}^{1},((J_{\varepsilon,A,V}%
^{c}\smallsetminus\mathbb{S}^{1}u)\cap U)/\mathbb{S}^{1}).
\]
Assume that every critical $\mathbb{S}^{1}$-orbit of $J_{\varepsilon,A,V}$ lying in
$J_{\varepsilon,A,V}^{-1}[\varepsilon^{3}(\ell_{G,V} E_1 -\delta),$
$\varepsilon^{3}(\ell_{G,V} E_1 +\delta)]$ is isolated. Since
$J_{\varepsilon,A,V}\colon \mathcal{N}_{\varepsilon,A,V}^{\tau}\rightarrow\mathbb{R}$
satisfies (PS)$_{c}$ at each $c\leq\varepsilon^{3}
(\ell_{G,V} E_1 +\delta)$ there
are only finitely many of them. Let $\mathbb{S}^{1}u_{1},\ldots,\mathbb{S}%
^{1}u_{m}$ be those critical $\mathbb{S}^{1}$-orbits of $J_{\varepsilon,A,V}$ in
$\mathcal{N}_{\varepsilon,A,V}^{\tau}$ which satisfy $J_{\varepsilon,A,V}%
(u_{i})<\varepsilon^{3}d.$ Applying Theorem 7.6 in \cite{chang} to $J_{\varepsilon,A,V
}\colon \mathcal{N}_{\varepsilon,A,V}^{\tau}\rightarrow\mathbb{R}$ with $a=\varepsilon
^{3}(\ell_{G,V} E_1 -\delta)$ and $b=\varepsilon^{3}d$ and Lemma
\ref{dim} we obtain that%
\begin{align*}%
\sum_{j=1}^{m}
\dim C_{\mathbb{S}^{1}}^{k}(J_{\varepsilon,A,V},\mathbb{S}^{1}u_{i})  &  \geq
 \dim\mathcal{H}^{k}(J_{\varepsilon,A,V}^{\varepsilon^{3}d}/\mathbb{S}^{1})\\
&  \geq\text{rank}\left(  i_{\rho}^{\ast}\colon \mathcal{H}^{k}(B_{\rho}M_{\tau
}/G)\rightarrow\mathcal{H}^{k}(M_{\tau}/G)\right)
\end{align*}
for every $k\geq0$, as claimed. The last assertion of Theorem \ref{mainthm} is
an immediate consequence of Theorem 7.6 in \cite{chang}.
\end{proof}

If the inclusion $i_{\rho}:M_{\tau}/G\hookrightarrow B_{\rho}M_{\tau}/G$ is a
homotopy equivalence then
\[
\operatorname{rank}\left(  i_{\rho}^{\ast}\colon\mathcal{H}^{k}(B_{\rho}M_{\tau
}/G)\rightarrow\mathcal{H}^{k}(M_{\tau}/G)\right)  =\dim\mathcal{H}%
^{k}(M_{\tau}/G).
\]

An immediate consequence of Theorem \ref{mainthm} is the following.

\begin{corollary}
\label{maincor}If assumption \emph{(\ref{compactness})} holds then, given
$\rho>0$ and $\delta>0,$ there exists $\bar{\varepsilon}>0$ such that for
every $\varepsilon>0$ problem $(\ref{prob})$ has at least
\[ 
{\sum\limits_{k=0}^{\infty}} 
\operatorname{rank}(i_{\rho}^{\ast}:\mathcal{H}^{k}(B_{\rho}M_{\tau
}/G)\rightarrow\mathcal{H}^{k}(M_{\tau}/G))
\]
geometrically different solutions in
$J_{\varepsilon,A,V}^{-1}[\varepsilon^{3}(\ell_{G,V} E_1 -\delta),\varepsilon^{3}(\ell_{G,V}
E_1 +\delta)]$, counted with their multiplicity.
\end{corollary}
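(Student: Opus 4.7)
The plan is to deduce the statement as a direct bookkeeping consequence of Theorem \ref{mainthm}; no new analytical work is needed. Given $\rho>0$ and $\delta>0$ as in the corollary, I would first shrink $\delta$ to $\delta':=\min\{\delta,\alpha/2\}$ so that $\delta'\in(0,\alpha)$, as required by the hypothesis of Theorem \ref{mainthm}. Any lower bound on the number of solutions in the smaller strip
$J_{\varepsilon,A,V}^{-1}[\varepsilon^{3}(\ell_{G,V}E_1-\delta'),\varepsilon^{3}(\ell_{G,V}E_1+\delta')]$
is a fortiori a lower bound for the prescribed larger strip, so this reduction is harmless. Applying Theorem \ref{mainthm} with $(\rho,\delta')$ yields some $\bar\varepsilon>0$ for which, for every $\varepsilon\in(0,\bar\varepsilon)$, exactly one of the two alternatives (a) or (b) holds.

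In alternative (a), $J_{\varepsilon,A,V}$ has a nonisolated $\tau$-intertwining critical $\mathbb{S}^{1}$-orbit $\mathbb{S}^{1}u_{\infty}$ inside the energy strip. By definition of nonisolated, every $\mathbb{S}^{1}$-invariant neighborhood of $\mathbb{S}^{1}u_{\infty}$ contains some other critical $\mathbb{S}^{1}$-orbit, so one extracts an infinite sequence of pairwise distinct (hence pairwise geometrically different) critical $\mathbb{S}^{1}$-orbits in the strip. Thus $(\ref{prob})$ already has infinitely many geometrically different solutions there, and the desired finite lower bound holds trivially.

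In alternative (b) we are given finitely many critical $\mathbb{S}^{1}$-orbits $\mathbb{S}^{1}u_{1},\ldots,\mathbb{S}^{1}u_{m}$ inside the strip, which by the theorem satisfy
$$
\sum_{j=1}^{m}\dim C_{\mathbb{S}^{1}}^{k}(J_{\varepsilon,A,V},\mathbb{S}^{1}u_{j}) \;\geq\; \operatorname{rank}\bigl(i_{\rho}^{\ast}\colon\mathcal{H}^{k}(B_{\rho}M_{\tau}/G)\to\mathcal{H}^{k}(M_{\tau}/G)\bigr)
$$
for every $k\geq 0$. Summing this inequality over $k$ and invoking the very definition $\mu(J_{\varepsilon},\mathbb{S}^{1}u_{j})=\sum_{k\geq 0}\dim C_{\mathbb{S}^{1}}^{k}(J_{\varepsilon,A,V},\mathbb{S}^{1}u_{j})$, one obtains
$$
\sum_{j=1}^{m}\mu(J_{\varepsilon,A,V},\mathbb{S}^{1}u_{j}) \;\geq\; \sum_{k=0}^{\infty}\operatorname{rank}\bigl(i_{\rho}^{\ast}\colon\mathcal{H}^{k}(B_{\rho}M_{\tau}/G)\to\mathcal{H}^{k}(M_{\tau}/G)\bigr),
$$
which is precisely the assertion that the geometrically different solutions of $(\ref{prob})$ in the strip, counted with multiplicity, number at least the right-hand side.

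There is no substantive obstacle here; the corollary is essentially a repackaging of Theorem \ref{mainthm}. The only two points deserving attention are (i) the preliminary reduction $\delta\leadsto\delta'<\alpha$ so that Theorem \ref{mainthm} applies, and (ii) the implicit finiteness of the sum $\sum_{k}\operatorname{rank}(i_{\rho}^{\ast})$, which is automatic because under the compactness hypothesis \eqref{compactness} the set $M_{\tau}$ is a compact subset of $\mathbb{R}^{3}$, so $M_{\tau}/G$ and $B_{\rho}M_{\tau}/G$ are compact metrizable spaces of finite covering dimension and their Alexander--Spanier cohomology vanishes in all sufficiently high degrees.
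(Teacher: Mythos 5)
Your proposal is correct and is exactly the intended argument: the paper offers no proof of Corollary \ref{maincor} beyond calling it ``an immediate consequence of Theorem \ref{mainthm}'', and your deduction (shrink $\delta$ into $(0,\alpha)$, dispose of alternative (a) by extracting infinitely many orbits near a nonisolated one, and in alternative (b) sum the critical-group inequality over $k$ to recover the total multiplicity) is the standard way to make that immediacy explicit. Your remarks on the harmlessness of shrinking the energy strip and on the finiteness of the sum are sensible additions, not deviations.
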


\subsection{Examples} \label{subs:5.1}

As a typical application of our existence result, we consider the constant 
         magnetic field $B(x_1,x_2,x_3) = (0,0,2)$ in $\mathbb{R}^3$. 
         We can consider its vector potential $A(x_1,x_2,x_3) = (-x_2,x_1,0)$,
          and identify $\mathbb{R}^3$ with $\mathbb{C} \times \mathbb{R}$. 
          With this in mind, we write $A(z,t) = (iz,0)$, with $z=x_1+ix_2$.
           We remark that $A(e^{i \theta}z,t) = e^{i\theta} A(z,t)$ 
           for every $\theta \in \mathbb{R}$.

Given $m \in \mathbb{N}$, $m \geq 1$ and $n \in \mathbb{Z}$, we look for 
           solutions $u$ to problem (\ref{prob}) which satisfy the symmetry property
\[
u \left( e^{2 \pi i k /m}z,t \right) = e^{2 \pi i n k /m} u \left( z,t \right)
\]
for every $k = 1, \ldots, m$ and $(z,t) \in \mathbb{C} \times \mathbb{R}$. We assume that $V$ satisfies
\begin{itemize}
\item[(a)] $V \in C^2(\mathbb{R}^3)$ is bounded and $\inf_{\mathbb{R}^3} V >0$; moreover
\[
\inf_{x \in \mathbb{R}^3} V^{3/2}(x) < \liminf_{|x| \to +\infty} V^{3/2}(x).
\]
\item[(b)] There exists $m_0 \in \mathbb{N}$ such that
\begin{align*}
m_0 \inf_{x \in \mathbb{R}^3} V^{3/2}(x) &< \inf_{t \in \mathbb{R}} V^{3/2}(0,t) \\
V \left( e^{2\pi i k/m_0} z,t \right) &= V(z,t)
\end{align*}
for every $k = 1,\ldots, m_0$ and $(z,t) \in \mathbb{C} \times \mathbb{R}$.
\end{itemize}
For each $m$ that divides $m_0$ (in symbols: $m|m_0$), we consider the group
\[
G_m = \left\{ e^{2\pi i k /m} \mid k = 1,\ldots, m \right\}
\]
acting by multiplication on the $z$-coordinate of each point 
           $(z,t) \in \mathbb{C} \times \mathbb{R}$. It is easy to check 
           that $A$ and $V$ match all the assumptions of 
           Theorem \ref{mainthm} for each $G=G_m$: the compactness 
           condition (\ref{compactness}) follows from the two 
           inequalities in (a) and (b). If $\tau \colon G_m \to \mathbb{S}^1$
            is any homeomorphism, we have that
\[
M_\tau = \left\{ x \in \mathbb{R}^3 \mid V(x) = \inf_{y \in \mathbb{R}^3} V(y) \right\}.
\]
Given $n \in \mathbb{Z}$, we consider the homeomorphism 
            $\tau \left( e^{2 \pi i k /m} \right) = e^{2\pi i nk/m}$.
             In particular, given $\rho$, $\delta>0$, for $\varepsilon$ 
             small enough we have
\[
\sum_{m|m_0} \sum_{k=0}^{\infty} m \operatorname{rank} 
             \left( i^*_\rho \colon \mathcal{H}^k (B_\rho M/G_m)
             \to \mathcal{H}^k (M/G_m) \right)
\]
geometrically distinct solutions, counted with multiplicity.
\begin{remark}
Our multiplicity result cannot be obtained, in general,
              via standard category arguments. For a concrete example,
               consider $M=\bigcup_{n \geq 1} S_n$, where
\[
S_n = \left\{ (x_1,x_2,x_3) \in \mathbb{R}^3 \mid \left( x_1 - 
               \frac{1}{n} \right)^2 + x_2^2 + x_3^2 = \frac{1}{n^2} \right\}.
\]
The category of $M$ is then $2$, whereas
\[
\lim_{\rho \to 0} \operatorname{rank} \left( i_\rho^* \colon 
               \mathcal{H}^{2} (B_\rho M) \to \mathcal{H}^{2}(M) \right) = +\infty.
\]
For a short proof, we refer to \cite[Example 1, pag. 1280]{cc2}
\end{remark}

\section{Appendix}

\begin{proposition}
The second derivative $J''_{\varepsilon,A,V}$ is continuous.
\end{proposition}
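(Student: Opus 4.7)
The plan is to reduce the problem to showing that the nonlinear part of $J''_{\varepsilon,A,V}$ is continuous as a function of $u$, in the norm of the space of bounded bilinear forms on $H^{1}_{\varepsilon,A}(\mathbb{R}^3,\mathbb{C})^{\tau}$. The first summand $\langle w,v\rangle_{\varepsilon,A,V}$ is independent of $u$ and contributes nothing. Thus it suffices to prove continuity in $u$ of the two nonlinear bilinear forms
\[
T_1(u)[v,w] = \operatorname{Re}\int_{\mathbb{R}^3}\Big(\frac{1}{|x|}\ast |u|^2\Big) w\overline{v},\qquad
T_2(u)[v,w] = \operatorname{Re}\int_{\mathbb{R}^3}\Big(\frac{1}{|x|}\ast (u\overline{w})\Big) u\overline{v}.
\]

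Fix a sequence $u_n\to u$ in $H^{1}_{\varepsilon,A}$. For $T_1$, I would use the pointwise identity $|u_n|^2-|u|^2=(u_n-u)\overline{u_n}+u\,\overline{(u_n-u)}$ and apply the Hardy--Littlewood--Sobolev inequality \eqref{eq:23} with $p=t=6/5$ (so that $1/p+1/t+1/3=2$), followed by H\"older's inequality in the form $\|fg\|_{L^{6/5}}\leq \|f\|_{L^{12/5}}\|g\|_{L^{12/5}}$. This gives
\[
\bigl|T_1(u_n)[v,w]-T_1(u)[v,w]\bigr|\leq C\,\|u_n-u\|_{L^{12/5}}\bigl(\|u_n\|_{L^{12/5}}+\|u\|_{L^{12/5}}\bigr)\|v\|_{L^{12/5}}\|w\|_{L^{12/5}}.
\]
For $T_2$ one argues analogously, writing the telescoping difference $u_n\otimes u_n-u\otimes u=(u_n-u)\otimes u_n+u\otimes(u_n-u)$ and applying HLS with $p=t=6/5$ again to each piece.

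To pass from these $L^{12/5}$ estimates to $H^{1}_{\varepsilon,A}$-bounds, I would invoke the diamagnetic inequality \eqref{di} together with the standard Sobolev embedding $H^{1}(\mathbb{R}^3,\mathbb{R})\hookrightarrow L^{12/5}(\mathbb{R}^3)$, which yields the continuous inclusion $H^{1}_{\varepsilon,A}(\mathbb{R}^3,\mathbb{C})\hookrightarrow L^{12/5}(\mathbb{R}^3,\mathbb{C})$, already implicit in the estimate \eqref{D}. Applying this to each factor, one concludes
\[
\sup_{\|v\|_{\varepsilon,A,V},\,\|w\|_{\varepsilon,A,V}\leq 1}\bigl|T_i(u_n)[v,w]-T_i(u)[v,w]\bigr|\leq C'\|u_n-u\|_{\varepsilon,A,V}\bigl(\|u_n\|_{\varepsilon,A,V}+\|u\|_{\varepsilon,A,V}\bigr),
\]
which tends to $0$ as $n\to\infty$, giving continuity of $u\mapsto J''_{\varepsilon,A,V}(u)$ in operator norm.

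The argument is essentially a matter of bookkeeping with HLS and H\"older, and the only real technical point is choosing the conjugate exponents so that everything fits into the Coulomb framework of Theorem \ref{thm:HLS}. The symmetric choice $p=t=6/5$ works precisely because it forces each factor to sit in $L^{12/5}$, which is exactly the Lebesgue space controlled by $H^{1}_{\varepsilon,A}$ via the diamagnetic inequality. No delicate compactness or convergence issue arises beyond this.
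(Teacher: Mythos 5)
Your proof is correct and follows essentially the same route as the paper's: the Hardy--Littlewood--Sobolev inequality with $p=t=6/5$, H\"older into $L^{12/5}$, a telescoping decomposition of the differences $|u_n|^2-|u|^2$ and $u_n\otimes u_n-u\otimes u$, and the diamagnetic/Sobolev embedding $H^1_{\varepsilon,A}\hookrightarrow L^{12/5}$. The only (cosmetic) difference is that the paper splits $|u_n|^2-|u|^2$ as $2\operatorname{Re}[(u_n-u)\overline{u}]+|u_n-u|^2$ and treats the case $u=0$ separately, whereas your single Lipschitz-on-bounded-sets estimate handles everything at once.
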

\begin{proof}
We first prove that $J''_{\varepsilon,A,V}$ is continuous at zero. Let $\{u_n\}_n$ be a sequence in $H^1_{\varepsilon,A}(\mathbb{R}^3,\mathbb{C})$ converging to zero. By Sobolev's embedding theorem, $u_n \to 0$ in $L^r(\mathbb{R}^3)$ for $r \in [2,6]$.
From (\ref{eq:23}) it follows that
\begin{multline} \label{eq:33}
\left| \int_{\mathbb{R}^3} \left( \int_{\mathbb{R}^3} \frac{|u_n(y)|^2}{|x-y|}dy \right) w(x) \overline{v(x)}\, dx \right| \\\leq C \Vert u_n \Vert^2_{L^{12/5}(\mathbb{R}^{3})}\Vert v \Vert_{L^{12/5}(\mathbb{R}^{3})} \Vert w \Vert_{L^{12/5}(\mathbb{R}^{3})}
\leq o(1) \| v\|_{\varepsilon, A, V} \ \| w\|_{\varepsilon, A, V}
\end{multline}
This implies that
\begin{equation}\label{flimit3}
\lim_{n \to +\infty}
\left|\operatorname{Re} \int_{\mathbb{R}^3} \left( \int_{\mathbb{R}^3} \frac{|u_n(y)|^2}{|x-y|}dy \right) w(x) \overline{v(x)}\, dx \right|=0
\end{equation}
whenever $u_n \to 0$ strongly in $H^1_{\varepsilon,A,V}(\mathbb{R}^3,\mathbb{C})$.

Similarly, we use (\ref{eq:23}) to prove that
\begin{multline*}
\left|\int_{\mathbb{R}^3} \left(  \frac{1}{|x|}* (u_n \overline{v})  \right)u_n \overline{w}
 \, dx \right|= \left| \int_{\mathbb{R}^3 \times \mathbb{R}^3} \frac{u_n(x) \overline{w(x)} u_n(y) \overline{v(y)}}{|x-y|}dx \, dy\right| \\
\leq C\left\Vert u_n \overline{w} \right\Vert_{L^{6/5}(\mathbb{R}^3)} \left\Vert u_n \overline{v} \right\Vert_{L^{6/5}(\mathbb{R}^3)} \\
\leq C \left\Vert u_n\right\Vert_{L^{12/5}(\mathbb{R}^3)}^2 \left\Vert v\right\Vert_{L^{12/5}(\mathbb{R}^3)} \left\Vert w\right\Vert_{L^{12/5}(\mathbb{R}^3)}
\end{multline*}
which implies that
\begin{equation}\label{flimit}
\lim_{n \to +\infty}\left| \operatorname{Re} \int_{\mathbb{R}^3}  \left(  \frac{1}{|x|}* ( u_n \overline{v})  \right) u_n \overline{w} \, dx
\right|=0
\end{equation}
whenever $u_n \to 0$ strongly in $H^1_{\varepsilon,A,V}(\mathbb{R}^3,\mathbb{C})$.
It is now easy to conclude that $J''_{\varepsilon,A,V}(u_n) \to J''_{\varepsilon,A,V}(0)$.

If $u_n \to u$ in $H^1_{\varepsilon,A}(\mathbb{R}^3,\mathbb{C})$, we replace $|u_n|^2$ in (\ref{eq:33}) with $u_n^0 = |u_n|^2-|u_n-u|^2-|u|^2$ and find
\begin{equation*}
\left| \int_{\mathbb{R}^3} \left( \int_{\mathbb{R}^3} \frac{|u_n^0(y)|}{|x-y|}dy \right) w(x) \overline{v(x)}\, dx \right| \leq C \|u_n^0\|_{L^{6/5}(\mathbb{R}^3)} |w\|_{\varepsilon,A,V} \|v\|_{\varepsilon,A,V}
\leq o(1) \|w\|_{\varepsilon,A,V} \|v\|_{\varepsilon,A,V}.
\end{equation*}
Analogously
\begin{multline*}
\left| \int_{\mathbb{R}^3} \left( \int_{\mathbb{R}^3} \frac{|u_n(y)-u(y)|^2}{|x-y|}dy \right) w(x) \overline{v(x)}\, dx \right| \\
\leq C \|u_n-u\|^2_{L^{12/5}(\mathbb{R}^3)} \|w\|_{\varepsilon,A,V} \|v\|_{\varepsilon,A,V}
\leq o(1) \|w\|_{\varepsilon,A,V} \|v\|_{\varepsilon,A,V},
\end{multline*}
we conclude that
\begin{equation} \label{eq:34}
\left|
\int_{\mathbb{R}^3} \left( \int_{\mathbb{R}^3} \frac{|u_n(y)|^2 - |u(y)|^2}{|x-y|}dy \right) w(x) \overline{v(x)} \, dx   \right|
\leq o(1) \|w\|_{\varepsilon,A,V} \|v\|_{\varepsilon,A,V}.
\end{equation}
Switching to the second term of $J''_{\varepsilon,A,V}(u_n)-J''_{\varepsilon,A,V}(u)$, we notice that
\begin{multline*}
\int_{\mathbb{R}^3 \times \mathbb{R}^3} \frac{u_n(x)\overline{w(x)}u_n(y)\overline{v(y)}}{|x-y|}dx \, dy - \int_{\mathbb{R}^3 \times \mathbb{R}^3} \frac{u(x)\overline{w(x)}u(y)\overline{v(y)}}{|x-y|}dx \, dy \\
=\int_{\mathbb{R}^3 \times \mathbb{R}^3} \frac{\left[ \left( u_n(y)-u(y) \right) u_n(x)+ \left( u_n(x)-u(x) \right) u(y) \right] \overline{v(y)} \overline{w(x)}}{|x-y|}dx \,dy,
\end{multline*}
so that
\begin{multline} \label{eq:35}
\left|
\int_{\mathbb{R}^3 \times \mathbb{R}^3} \frac{u_n(x)\overline{w(x)}u_n(y)\overline{v(y)}}{|x-y|}dx \, dy - \int_{\mathbb{R}^3 \times \mathbb{R}^3} \frac{u(x)\overline{w(x)}u(y)\overline{v(y)}}{|x-y|}dx \, dy \right| \\
\leq \left| \int_{\mathbb{R}^3 \times \mathbb{R}^3} \frac{\left( \left( u_n(y)-u(y) \right) u_n(x) \right) \overline{v(y)} \overline{w(x)}}{|x-y|}dx \,dy \right| \\
\quad {}+
\left| \int_{\mathbb{R}^3 \times \mathbb{R}^3} \frac{\left( \left( u_n(x)-u(x) \right) u(y) \right) \overline{v(y)} \overline{w(x)}}{|x-y|}dx \,dy \right| \\
\leq o(1) \left\Vert v \right\Vert_{\varepsilon,A,V} \left\Vert w \right\Vert_{\varepsilon,A,V}
\end{multline}
because $u_n \to u$. Recalling that $|\operatorname{Re}z| \leq |z|$ for every $z \in \mathbb{C}$ and putting together (\ref{eq:34}) and (\ref{eq:35}), we conclude that $J''_{\varepsilon,A,V}(u_n) \to J''_{\varepsilon,A,V}(u)$.
\end{proof}

\end{document}